\documentclass[a4paper,reqno]{amsart}

\usepackage{amsmath,amsfonts,amsthm,amssymb,latexsym,mathrsfs,enumerate}

\usepackage{hyperref}
\usepackage[all]{xy}

\addtolength{\hoffset}{-1.5cm}
\addtolength{\textwidth}{3cm}

\newcommand{\diag}{\mathrm{diag}}
\newcommand{\tr}{\mathrm{tr}}
\newcommand{\ev}{\mathrm{ev}}
\newcommand{\id}{\mathrm{id}}

\newcommand{\cs}{\mathrm{C}^*}

\newcounter{claimcounter}
\newenvironment{claim}{
\smallskip\noindent
\refstepcounter{claimcounter}
\underline{Claim \theclaimcounter:}
}

\newenvironment{claimproof}{
\smallskip\noindent
\emph{Proof of Claim \theclaimcounter.}
}

\newtheoremstyle{smallcaps}
    {3pt}                    % Space above
    {3pt}                    % Space below
    {\itshape}                   % Body font
    {}                           % Indent amount
    {\sc}                   % Theorem head font
    {.}                          % Punctuation after theorem head
    {.5em}                       % Space after theorem head
    {}  % Theorem head spec (can be left empty, meaning ÔnormalÕ)
    
\newtheoremstyle{smallcapsdef}
    {3pt}                    % Space above
    {3pt}                    % Space below
    {}                   % Body font
    {}                           % Indent amount
    {\sc}                   % Theorem head font
    {.}                          % Punctuation after theorem head
    {.5em}                       % Space after theorem head
    {}  % Theorem head spec (can be left empty, meaning ÔnormalÕ)

\theoremstyle{smallcaps}

\newtheorem {thm}{Theorem}[section]

\newtheorem {prop}[thm]{Proposition}
\newtheorem {corollary}[thm]{Corollary}

\theoremstyle {smallcapsdef}

\newtheorem {remark}[thm]{Remark}

\numberwithin{equation}{section}

\begin{document}

\author{Bhishan Jacelon and Wilhelm Winter}
\date{\today}
\title{$\mathcal{Z}$ is universal}

\address{Mathematisches Institut\\Einsteinstr. 62\\48149 M\"unster, Germany} 
\email{b.jacelon@uni-muenster.de}
\email{wwinter@uni-muenster.de}
\thanks{Research supported by the DFG  through SFB 878, by the ERC through AdG 267079 and by the EPSRC through EP/G014019/1 and EP/I019227/1.}

\keywords{Jiang-Su algebra, strongly self-absorbing $\mathrm{C}^*$-algebra, stably projectionless $\mathrm{C}^*$-algebra, order zero map, classification}
\subjclass[2000]{46L35, 46L05}

\begin{abstract}
We use order zero maps to express the Jiang-Su algebra $\mathcal{Z}$ as a universal $\cs$-algebra on countably many generators and relations, and we show that a natural deformation of these relations yields the stably projectionless algebra $\mathcal{W}$ studied by Kishimoto, Kumjian and others. Our presentation is entirely explicit and involves only $^*$-polynomial and order relations.
\end{abstract}
\maketitle

\section{Introduction}

In Elliott's programme to classify simple, nuclear $\cs$-algebras using $K$-theoretic invariants, the Jiang-Su algebra $\mathcal{Z}$ plays a particularly prominent role (see \cite{Winter:2007qf}). While there are various ways of characterizing $\mathcal{Z}$ (see for example \cite{Jiang:1999hb} and \cite{Rordam:2009qy}), its most concise description (due to the second named author, in \cite{Winter:2009yq}) is as the unique initial object in the category of strongly self-absorbing $\cs$-algebras. Here, a separable, unital $\cs$-algebra $\mathcal{D} \ne \mathbb{C}$ is \emph{strongly self-absorbing} if there is an isomorphism $\varphi: \mathcal{D} \to \mathcal{D} \otimes \mathcal{D}$ that is approximately unitarily equivalent to the first factor embedding, cf.\ \cite{Toms:2007uq}. The statement that $\mathcal{Z}$ is an initial object in this category is equivalent to saying that every strongly self-absorbing $\cs$-algebra absorbs $\mathcal{Z}$ tensorially (i.e.\ is `$\mathcal{Z}$-stable').

Apart from $\mathcal{Z}$, the known strongly self-absorbing algebras are: the Cuntz algebras $\mathcal{O}_2$ and $\mathcal{O}_\infty$, UHF algebras of infinite type, and such UHF algebras tensored with $\mathcal{O}_\infty$. These all admit presentations as universal $\cs$-algebras (see Section~\ref{questions} for a discussion), and Theorem~\ref{main1} of this article provides such a description for $\mathcal{Z}$ which, although complicated, is explicit and algebraic in the sense that it involves only $^*$-polynomial and order relations. The proof relies on the `order zero' presentations of prime dimension drop algebras described in \cite{Rordam:2009qy} (see Section~\ref{prelim}), and gives a construction of $\mathcal{Z}$ as an inductive limit of such algebras with connecting maps defined in terms of generators and relations.

The Jiang-Su algebra may be thought of as a stably finite analogue of $\mathcal{O}_\infty$, and the $\cs$-algebra $\mathcal{W}$ constructed in \cite{Jacelon:2010fj} (and studied in another form in \cite{Kishimoto:1996yu}) has been similarly proposed as a stably finite analogue of $\mathcal{O}_2$. The conjecture that $\mathcal{W}\otimes \mathcal{W} \cong \mathcal{W}$, while still open, is known to have interesting consequences. For example, it is shown in \cite{Jacelon:2010fj} that among the $\cs$-algebras classified in \cite{Robert:2010qy}, those that are simple and have trivial $K$-theory would absorb $\mathcal{W}$ tensorially. On the other hand, L.\ Robert proves in \cite{Robert:2011fj} that the Cuntz semigroup of a $\mathcal{W}$-stable $\cs$-algebra is determined by the cone of its lower semicontinuous 2-quasitraces. These results indicate that $\mathcal{W}$ may play an important role in the classification of nuclear, stably projectionless $\cs$-algebras. In this article, we examine the structure of $\mathcal{W}$ rather than its role in classification, by showing in Theorem~\ref{main2} how to obtain $\mathcal{W}$ as a nonunital deformation of $\mathcal{Z}$.

The paper is organized as follows. In Section~\ref{prelim} we establish notation and recall various basic facts about order zero maps and dimension drop algebras. Section~\ref{z universal} contains the presentation of $\mathcal{Z}$ as a universal $\cs$-algebra (Theorems~\ref{main1}~and~\ref{alt1}), and Section~\ref{w universal} contains the corresponding description of $\mathcal{W}$ (Theorem~\ref{main2}). We conclude with some open questions in Section~\ref{questions}.

\subsection*{\sc Acknowledgements} The first author would like to thank Simon Wassermann, Stuart White, Rob Archbold and Ulrich Kr\"ahmer for carefully reading the version of this article that appeared in his doctoral thesis.

\section{Preliminaries} \label{prelim}
In this section, we collect some well-known facts about order zero maps and dimension drop algebras that are used throughout the article. (Detailed exposition of order zero maps can be found in \cite{Winter:2012pi} and \cite{Winter:2009sf}.) We denote by $e_{ij}$ (or $e_{ij}^{(n)}$) the canonical $(i,j)$-th matrix unit in $M_n = M_n(\mathbb{C})$.

Recall that a completely positive (c.p.)\ map $\varphi:B \to A$ has \emph{order zero} if it preserves orthogonality. Every completely positive and contractive (c.p.c.)\ order zero map $\varphi:B\to A$ (for $B$ unital) is of the form $\varphi(\cdot) = \pi_\varphi(\cdot)\varphi(1_B) = \varphi(1_B)\pi_\varphi(\cdot)$ for a $^*$-homomorphism $\pi_\varphi: B \to A^{**}$ called the \emph{supporting $^*$-homomorphism of $\varphi$}. We frequently use the notion of positive functional calculus provided by this decomposition: if $f\in C_0(0,1]$ is positive with $\|f\|\le 1$ then the map $f(\varphi):B \to A$ given by $f(\varphi)(\cdot) := \pi_\varphi(\cdot)f(\varphi(1_B))$ is a well-defined c.p.c.\ order zero map. It is easy to see that if $p\in B$ is a projection, then $f(\varphi)(p) = f(\varphi(p))$. On the other hand, if $\varphi(1_B)$ is a projection, then $\varphi$ is in fact a $^*$-homomorphism.

Finally, c.p.c.\ order zero maps $B \to A$ correspond bijectively to $^*$-homomorphisms $C_0((0,1],B) \to A$. For $B=M_n$, one way of interpreting this fact is to say that the cone $C_0((0,1],M_n)$ is \emph{the universal $\cs$-algebra generated by a c.p.c.\ order zero map on $M_n$}.  Equivalently, it is easy to check that $C_0((0,1],M_n)$ is the universal $\cs$-algebra on generators $x_1,\ldots,x_n$ subject to the relations $\mathcal{R}_n^{(0)}$ given by
\begin{equation} \label{cone}
\|x_i\|\le 1,\quad x_1\ge 0,\quad x_ix_i^*=x_1^2,\quad x_j^*x_j \perp x_i^*x_i\quad \text{for} \quad 1\le i\ne j \le n
\end{equation}
(for example by mapping $x_j$ to $t^{1/2}\otimes e_{1j}$, so that $t\otimes e_{ij}$ corresponds to $x_i^*x_j$). One can therefore view the statement
\begin{equation} \label{notation}
C_0((0,1],M_n) = \cs(\varphi \mid \text{$\varphi$ c.p.c.\ order zero on $M_n$})
\end{equation}
as an abbreviation for these relations.

\begin{remark} \label{M_2}
In the case $n=2$, $C_0((0,1],M_2)$ is the universal $\cs$-algebra $\cs(x\mid \|x\|\le 1, x^2=0)$. Therefore, if $A$ is a $\cs$-algebra and $v\in A$ is a contraction with $v^2=0$, then there is a unique c.p.c.\ order zero map $\psi:M_2 \to A$ with $\psi^{1/2}(e_{12}) = v$ (so that $\psi(e_{11}) = vv^*$ and $\psi(e_{22})=v^*v$).
\end{remark}

By a \emph{prime dimension drop algebra}, we mean a $\cs$-algebra of the form
\begin{equation}
Z_{p,q}:= \{f\in C([0,1], M_p\otimes M_q) \mid f(0) \in M_p \otimes 1_q, f(1) \in 1_p \otimes M_q\},
\end{equation}
where $p$ and $q$ are coprime natural numbers. The Jiang-Su algebra $\mathcal{Z}$ is the unique inductive limit of prime dimension drop algebras which is simple and has a unique tracial state (see \cite{Jiang:1999hb}).

The order zero notation (\ref{notation}) essentially appears in \cite[Proposition 2.5]{Rordam:2009qy}, where the presentation of prime dimension drop algebras described in  \cite[Proposition 7.3]{Jiang:1999hb} is reinterpreted in terms of order zero maps. Specifically, the prime dimension drop algebra $Z_{p,q}$ is the universal unital $\cs$-algebra
\[
\cs(\alpha, \beta \mid  \text{$\alpha$ c.p.c.\ order zero on $M_p$}, \text{$\beta$ c.p.c.\ order zero on $M_q$}, \alpha(1_p) + \beta(1_q)=1, [\alpha(M_p), \beta(M_q)]=0),
\]
with generators corresponding to the obvious embeddings of $C_0([0,1),M_p)$ and $C_0((0,1],M_q)$ into $Z_{p,q}$.

When $q=p+1$, there is another presentation of $Z_{p,p+1}$ in terms of order zero maps that does not involve a commutation relation. The following is essentially contained in \cite[Proposition 5.1]{Rordam:2009qy}, and we note that these relations have already proved highly useful, for example in \cite{Winter:2010hl}, \cite{Winter:2012pi}, \cite{Sato:2010rz} and \cite{Matui:2012qv}.

\begin{prop} \label{universal drop}
Let $Z^{(n)}$ be the universal unital $\cs$-algebra $\cs(\varphi, \psi \mid \mathcal{R}_n)$, where $\mathcal{R}_n$ denotes the set of relations:
\begin{enumerate}[(i)]
\item $\varphi$ and $\psi$ are c.p.c.\ order zero maps on $M_n$ and $M_2$ respectively;
\item $\psi(e_{11})=1-\varphi(1_n)$;
\item $\psi(e_{22})\varphi(e_{11})=\psi(e_{22})$.
\end{enumerate}
Then $Z^{(n)} \cong Z_{n,n+1}$.
\end{prop}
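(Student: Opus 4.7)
The plan is to construct mutually inverse unital $*$-homomorphisms between $Z^{(n)}$ and $Z_{n,n+1}$ using the respective universal properties. First, the relations $\mathcal{R}_n$ are admissible since every generator is a norm-one contraction, so $Z^{(n)}$ exists as a unital, separable $\cs$-algebra.

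For the surjection $Z^{(n)} \twoheadrightarrow Z_{n,n+1}$, I would exhibit $\varphi, \psi$ in $Z_{n,n+1}$ witnessing $\mathcal{R}_n$. Using the commuting order zero presentation of $Z_{n,n+1}$ recalled just above the statement (with generators $\alpha$ on $M_n$ and $\beta$ on $M_{n+1}$ satisfying $\alpha(1_n)+\beta(1_{n+1})=1$ and $[\alpha(M_n),\beta(M_{n+1})]=0$), set $\varphi := \alpha$, so that $1-\varphi(1_n)=\beta(1_{n+1})$ is forced. By Remark~\ref{M_2}, specifying a $\psi$ on $M_2$ with $\psi(e_{11})=\beta(1_{n+1})$ and $\psi(e_{22})\le\alpha(e_{11})$ amounts to finding a contraction $v \in Z_{n,n+1}$ satisfying $v^2=0$, $vv^*=\beta(1_{n+1})$ and $v^*v \le \alpha(e_{11})$. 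Such a $v$ can be written down directly as a continuous matrix-valued function on $[0,1]$ with the correct endpoint behavior so as to lie in $Z_{n,n+1}$; this parallels (and is essentially contained in) \cite[Prop.~5.1]{Rordam:2009qy}.

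For the inverse $Z_{n,n+1} \to Z^{(n)}$, I would verify the commuting order zero presentation inside $Z^{(n)}$: set $\alpha := \varphi$ and construct a c.p.c.\ order zero map $\beta \colon M_{n+1} \to Z^{(n)}$ commuting with $\varphi(M_n)$ with $\beta(1_{n+1})=1-\varphi(1_n)$. By the universal description (\ref{notation}) applied to $M_{n+1}$, it suffices to produce contractions $y_1,\ldots,y_{n+1}\in Z^{(n)}$ satisfying the cone relations $\mathcal{R}_{n+1}^{(0)}$ and commuting with $\varphi(M_n)$. Take $v:=\psi^{1/2}(e_{12})$, so that $vv^*=1-\varphi(1_n)$ and $v^*v\le\varphi(e_{11})$, and combine $v$ with the matrix-unit elements $\varphi^{1/2}(e_{1k})$, $k=1,\ldots,n$: roughly, $y_{n+1}:=v$, while for $k\le n$ the element $y_k$ is obtained from a product involving $v$ and $\varphi^{1/2}(e_{1k})$ (interpreted via order zero functional calculus) so that $y_ky_k^*=vv^*=y_1^2=\psi(e_{11})$ and $y_k^*y_k$ lies in the hereditary subalgebra cut down by $\varphi(e_{kk})$, which automatically gives the required orthogonality of the $y_k^*y_k$.

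The main obstacle is precisely this construction of $\beta$: one must verify simultaneously the norm bounds, the equalities $y_ky_k^*=y_1^2$, the pairwise orthogonality of the $y_k^*y_k$, and the commutation with $\varphi(M_n)$. These reduce to routine — if intricate — computations using the order zero functional calculus recalled in Section~\ref{prelim} together with the relation $\psi(e_{22})\le\varphi(e_{11})$. Once $\beta$ is in hand, both composite maps fix generators by construction, so by universality they extend to mutually inverse $*$-homomorphisms and the isomorphism $Z^{(n)}\cong Z_{n,n+1}$ follows.
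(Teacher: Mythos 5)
There is a genuine gap, and it occurs at the very first step of each direction: you identify the order zero map $\varphi$ of the relations $\mathcal{R}_n$ with the canonical generator $\alpha$ of the \emph{commuting} presentation of $Z_{n,n+1}$, and these are genuinely different maps that cannot be interchanged. In the commuting presentation one has $\alpha(1_n)+\beta(1_{n+1})=1$ with $\beta$ order zero on $M_{n+1}$; concretely $\alpha(a)(t)=(1-t)(a\otimes 1_{n+1})$ and $\beta(b)(t)=t(1_n\otimes b)$ up to unitaries, so $1-\alpha(1_n)(t)=t\cdot 1$ has rank $n(n+1)$ at every interior point, while $\alpha(e_{11})(t)$ has rank $n+1$. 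The relations $\mathcal{R}_n$ force $\psi(e_{11})=1-\varphi(1_n)$ to be Murray--von Neumann equivalent, via $v=\psi^{1/2}(e_{12})$, to $\psi(e_{22})\le\varphi(e_{11})$, hence $\operatorname{rank}\bigl(1-\varphi(1_n)\bigr)\le\operatorname{rank}\varphi(e_{11})$ in every fibre; with $\varphi:=\alpha$ this reads $n(n+1)\le n+1$, which fails for $n\ge 2$. So the contraction $v$ you propose to ``write down directly'' does not exist. The reverse direction fails for the same reason: if $\beta$ is order zero on $M_{n+1}$ then $\beta(1_{n+1})=\sum_{i=1}^{n+1}\beta(e_{ii})$ is a sum of $n+1$ pairwise orthogonal, fibrewise equirank positive elements, so its fibrewise rank is divisible by $n+1$; but the $\varphi$ that actually satisfies $\mathcal{R}_n$ (as in (\ref{phigen1})) has $1-\varphi(1_n)$ of rank $n$ at interior points, so no order zero $\beta$ on $M_{n+1}$ with $\beta(1_{n+1})=1-\varphi(1_n)$ exists. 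The whole point of $\mathcal{R}_n$ is that the defect $1-\varphi(1_n)$ is \emph{small} (subequivalent to a single matrix corner of $\varphi$), whereas in the commuting presentation the defect $1-\alpha(1_n)$ is full.

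A strategy that works---and is the one the paper carries out in detail for the nonunital analogue, Proposition~\ref{nonunital drop}, the unital case being \cite[Proposition 5.1]{Rordam:2009qy}---is representation-theoretic rather than a matching of the two presentations: exhibit concrete $\varphi,\psi$ in $Z_{n,n+1}$ satisfying $\mathcal{R}_n$ (with $1-\varphi(1_n)$ supported in the last column block, as in (\ref{phigen1})), check that they generate via Stone--Weierstrass, and then prove universality by reducing to irreducible representations of the relations via \cite[Lemma 3.2.2]{Loring:1997it} and showing, using a central element of the form $z=\hat\psi(e_{11})+\sum_i\hat\psi_i(e_{22})$, that every irreducible representation is unitarily equivalent to a point evaluation. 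If you wish to salvage your two-presentations approach, you would have to manufacture $\alpha$ and $\beta$ from $\varphi$ and $\psi$ by nontrivial functional calculus (certainly not $\alpha:=\varphi$), which amounts to essentially the same work as the direct argument.
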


In Section~\ref{z universal}, we use Proposition~\ref{universal drop} to write $\mathcal{Z}$ as a limit of dimension drop algebras in a universal way.
We make analogous use of Proposition~\ref{nonunital drop}, a nonunital version of Proposition~\ref{universal drop}, to present $\mathcal{W}$.

\section{Generators and relations for the Jiang-Su algebra} \label{z universal}

In this section, we will construct an inductive system $(Z^{(q(k))}, \alpha_k)$, where $q(k)=p^{3^k}$ for some fixed $p\ge 2$ ($p=2$ will do) and $Z^{(q(k))} = \cs(\varphi_k, \psi_k \mid \mathcal{R}_{q(k)}) \cong Z_{q(k),q(k)+1}$ (as in Proposition~\ref{universal drop}), and we will check that the inductive limit is simple with a unique tracial state. It will then follow from the classification theorem of \cite{Jiang:1999hb} that $\mathcal{Z} \cong \varinjlim (Z^{(q(k))}, \alpha_k)$.

If this procedure is to provide an explicit presentation of $\mathcal{Z}$ as a universal $\cs$-algebra, we need to be able to describe the connecting maps $\alpha_k$ in terms of generators and relations. (This is perhaps the key difference between our construction and the original construction of $\mathcal{Z}$ as an inductive limit in \cite{Jiang:1999hb}.) In other words, for every $k\in \mathbb{N}$ we will find c.p.c.\ order zero maps $\hat\varphi_k:M_{q(k)} \to Z^{(q(k+1))}$ and $\hat\psi_k:M_2 \to Z^{(q(k+1))}$ that satisfy the relations $\mathcal{R}_{q(k)}$ of Proposition~\ref{universal drop}. By universality, we will then have unital connecting maps $\alpha_k: Z^{(q(k))} \to Z^{(q(k+1))}$ with $\alpha_k \circ \varphi_k = \hat\varphi_k$ and $\alpha_k \circ \psi_k = \hat\psi_k$.

Before giving the connecting maps, it is instructive to note that there are obvious choices for $\hat\varphi_k$ and $\hat\psi_k$. Since $q(k+1) = q(k)^3$, we can identify $M_{q(k+1)}$ with $M_{q(k)} \otimes M_{q(k)} \otimes M_{q(k)}$ (and $e_{11}^{(q(k+1))}$ with $e_{11}^{(q(k))}\otimes e_{11}^{(q(k))}\otimes e_{11}^{(q(k))}$). We could then set $\hat\varphi_k = \varphi_{k+1} \circ (\id_{M_{q(k)}} \otimes 1_{q(k)} \otimes 1_{q(k)})$ and $\hat\psi_k = \psi_{k+1}$; it is easy to see that these maps satisfy the relations $\mathcal{R}_{q(k)}$, but the corresponding inductive limit certainly would not be simple. The idea is therefore to define $\hat\varphi_k$ in such a way as to ensure that $[0,1]$ is chopped up into suitably small pieces under the induced $^*$-homomorphism $\alpha_k$; $\hat\psi_k^{1/2}(e_{12})$ will then be some partial-isometry-like element that facilitates the relations $\mathcal{R}_{q(k)}$.

One way of doing this is as follows. Define $\rho_k: M_{q(k)} \to M_{q(k+1)}$ by
\begin{equation} \label{rho}
\rho_k = (\id_{M_{q(k)}} \otimes 1_{q(k)-1} \otimes 1_{q(k)}) \oplus \left(\bigoplus_{i=1}^{q(k)} \frac{i}{q(k)}\left(\id_{M_{q(k)}} \otimes e_{q(k),q(k)} \otimes e_{ii}\right)\right).
\end{equation}
Note that $\rho_k$ is c.p.c.\ order zero, with supporting $^*$-homomorphism $\pi_{\rho_k} = \id_{M_{q(k)}} \otimes 1_{q(k)} \otimes 1_{q(k)}$. We may then define $\hat\varphi_k := \varphi_{k+1} \circ \rho_k$. For this to work, we need to be able to transport the defect $1-\varphi_{k+1}(\rho_k(1_{q(k)})) = (1-\varphi_{k+1}(1_{q(k+1)})) + \varphi_{k+1}(1_{q(k+1)}-\rho_k(1_{q(k)}))$ underneath $\varphi_{k+1}(\rho_k(e_{11}^{(q(k))}))$, and the basic idea is to do this in two steps.
\begin{enumerate}
\item[Step 1.] Use $\psi_{k+1}(e_{12})$ to transport the corner $\pi_{\psi_{k+1}}(e_{11})(1-\varphi_{k+1}(\rho_k(1_{q(k)})))\pi_{\psi_{k+1}}(e_{11})$ underneath $\pi_{\psi_{k+1}}(e_{22})\varphi_{k+1}(e_{11}^{(q(k+1))})\pi_{\psi_{k+1}}(e_{22}) \le \varphi_{k+1}(e_{11}^{(q(k+1))}) \le \varphi_{k+1}(\rho_k(e_{11}^{(q(k))}))$.
\item[Step 2.] Use a partial isometry $v_{k+1} \in M_{q(k+1)}$ to transport (a projection bigger than) $1_{q(k+1)}-\rho_k(1_{q(k)})$ underneath (a projection smaller than) $\rho_k(e_{11}^{(q(k))}) - e_{11}^{(q(k+1))}$, so that $\varphi_{k+1}(v_{k+1})$ transports the rest of $1-\varphi_{k+1}(\rho_k(1_{q(k)}))$ underneath $\varphi_{k+1}(\rho_k(e_{11}^{(q(k))}))- \varphi_{k+1}(e_{11}^{(q(k+1))})$.
\end{enumerate}

Although this is essentially the right idea, it needs fine-tuning in the guise of functional calculus. We achieve this in Theorem~\ref{alt1} by adjusting the relations for $Z^{(q(k))}$, while for Theorem~\ref{main1}, we modify $\hat\varphi_k$ and $\hat\psi_k$ using the following piecewise linear functions:
\begin{center}
\begin{picture}(367,70)
\put(0,10){\vector(1,0){177}}
\put(7,3){\vector(0,1){60}}
\put(37,7){\line(0,1){6}}
\put(167,7){\line(0,1){6}}
\put(3,50){\line(1,0){6}}
\thicklines
\put(7,10){\line(3,4){30}}
\put(37,50){\line(1,0){130}}
\put(1,50){\makebox(0,0){$1$}}
\put(37,-4){\makebox(0,0)[b]{\small $\frac{3}{16}$ \normalsize}}
\put(167,-2){\makebox(0,0)[b]{\small $1$\normalsize}}
\put(100,60){\makebox(0,0){$d$}}

\thinlines
\put(190,10){\vector(1,0){177}}
\put(197,3){\vector(0,1){60}}
\put(237,7){\line(0,1){6}}
\put(277,7){\line(0,1){6}}
\put(317,7){\line(0,1){6}}
\put(357,7){\line(0,1){6}}
\put(193,50){\line(1,0){6}}

\thicklines
\put(197,10){\line(1,0){40}}
\put(237,10){\line(1,1){40}}
\put(277,50){\line(1,0){80}}
\put(191,50){\makebox(0,0){$1$}}
\put(237,-4){\makebox(0,0)[b]{\small $\frac{1}{4}$\normalsize}}
\put(277,-4){\makebox(0,0)[b]{\small $\frac{1}{2}$\normalsize}}
\put(317,-4){\makebox(0,0)[b]{\small $\frac{3}{4}$\normalsize}}
\put(357,-2){\makebox(0,0)[b]{\small $1$\normalsize}}
\put(290,60){\makebox(0,0){$f$}}
\end{picture}

\begin{picture}(367,70)
\put(0,10){\vector(1,0){177}}
\put(7,3){\vector(0,1){60}}
\put(47,7){\line(0,1){6}}
\put(87,7){\line(0,1){6}}
\put(127,7){\line(0,1){6}}
\put(167,7){\line(0,1){6}}
\put(3,50){\line(1,0){6}}

\thicklines
\put(7,10){\line(1,0){40}}
\put(47,10){\line(1,1){40}}
\put(87,50){\line(1,-1){40}}
\put(127,10){\line(1,0){40}}
\put(1,50){\makebox(0,0){$1$}}
\put(47,-4){\makebox(0,0)[b]{\small $\frac{1}{4}$\normalsize}}
\put(87,-4){\makebox(0,0)[b]{\small $\frac{1}{2}$\normalsize}}
\put(127,-4){\makebox(0,0)[b]{\small $\frac{3}{4}$\normalsize}}
\put(167,-2){\makebox(0,0)[b]{\small $1$\normalsize}}
\put(100,55){\makebox(0,0){$g$}}

\thinlines
\put(190,10){\vector(1,0){177}}
\put(197,3){\vector(0,1){60}}
\put(237,7){\line(0,1){6}}
\put(277,7){\line(0,1){6}}
\put(317,7){\line(0,1){6}}
\put(357,7){\line(0,1){6}}
\put(193,50){\line(1,0){6}}

\thicklines
\put(197,10){\line(1,0){80}}
\put(277,10){\line(1,1){40}}
\put(317,50){\line(1,0){40}}
\put(191,50){\makebox(0,0){$1$}}
\put(237,-4){\makebox(0,0)[b]{\small $\frac{1}{4}$\normalsize}}
\put(277,-4){\makebox(0,0)[b]{\small $\frac{1}{2}$\normalsize}}
\put(317,-4){\makebox(0,0)[b]{\small $\frac{3}{4}$\normalsize}}
\put(357,-2){\makebox(0,0)[b]{\small $1$\normalsize}}
\put(290,55){\makebox(0,0){$h$}}
\end{picture}
\end{center}
These are chosen so that, writing $\bar d(t)=d(1-t)$, we have
\begin{equation} \label{dominance}
g= f-h, \quad hf=h, \quad (1-f)\bar d = 1-f \quad \text{and} \quad g\bar d=g.
\end{equation}
For use in Section~\ref{w universal}, we also note that if $\hat d$ is the function $\hat d(t)=d(t(1-t))$ then we have
\begin{equation} \label{w dominance}
(f-f^2)\hat d=f-f^2 \quad \text{and} \quad g\hat d=g.
\end{equation}
Finally, to accomplish Step 2, we choose a partial isometry
\begin{equation} \label{vdef}
v_{k+1}\in M_{q(k+1)}
\end{equation}
such that
\[
v_{k+1}v_{k+1}^* = 1_{q(k)} \otimes e_{q(k),q(k)} \otimes 1_{q(k)-1}
\] 
and
\[
v_{k+1}^*v_{k+1} = (e_{11} \otimes 1_{q(k)-1} \otimes 1_{q(k)}) + (e_{11} \otimes e_{q(k),q(k)} \otimes e_{q(k),q(k)}) - (e_{11} \otimes e_{11} \otimes e_{11}).
\]
This is possible since both of these projections have rank $q(k)^2-q(k)$; since they are orthogonal, we moreover have $v_{k+1}^2=0$. This $v_{k+1}$ then satisfies:
\begin{enumerate}[(i)]
\item \label{v1} $v_{k+1}^*v_{k+1} \perp e_{11}\otimes e_{11}\otimes e_{11} = e_{11}^{(q(k+1))}$ (in fact, $v_{k+1}v_{k+1}^*$ is orthogonal to $e_{11}^{(q(k+1))}$ too);
\item \label{v2} $v_{k+1}^*v_{k+1}$ is dominated by $\rho_k(e_{11}^{(q(k))})$ (and therefore by $\rho_k(e_{11}^{(q(k))})-e_{11}^{(q(k+1))}$); and
\item \label{v3} $v_{k+1}v_{k+1}^*$ acts like a unit on
\begin{equation} \label{1-rho}
1_{q(k+1)}-\rho_k(1_{q(k)}) = \bigoplus_{i=1}^{q(k)}\left(1-\frac{i}{q(k)}\right)(1_{q(k)} \otimes e_{q(k),q(k)} \otimes e_{ii}).
\end{equation}
\end{enumerate}

\begin{thm} \label{main1}
Let the functions $d,f,g,h \in C_0(0,1]$, the partial isometries $v_{k+1} \in M_{q(k+1)}$, and the c.p.c.\ order zero maps $\rho_k: M_{q(k)} \to M_{q(k+1)}$ be as above for each $k\in \mathbb{N}$. Define $\mathcal{Z}_U$ to be the universal unital $\cs$-algebra generated by c.p.c.\ order zero maps $\varphi_k$ on $M_{q(k)}$ ($k\in \mathbb{N}$) and $\psi_k$ on $M_2$ ($k\in \mathbb{N}$) such that for each $k$, these maps satisfy the relations $\mathcal{R}_{q(k)}$, i.e.\
\begin{equation} \label{block1}
\psi_k(e_{11})=1-\varphi_k(1_{q(k)})
\end{equation}
and
\begin{equation} \label{block2}
\psi_k(e_{22})\varphi_k(e_{11})=\psi_k(e_{22}),
\end{equation}
together with the additional relations $\mathcal{S}_{q(k)}$ given by
\begin{equation} \label{connect1}
\varphi_k = f(\varphi_{k+1}) \circ \rho_k,
\end{equation}
\begin{align} \label{connect2}
\psi^{1/2}_k(e_{12}) &= \left(1-f(\varphi_{k+1})(1_{q(k+1)}) + g(\varphi_{k+1})(1_{q(k+1)} - \rho_k(1_{q(k)}))\right)^{1/2} d(\psi_{k+1})(e_{12})\\
\notag &+ h(\varphi_{k+1})(1_{q(k+1)} - \rho_k(1_{q(k)}))^{1/2}f(\varphi_{k+1})(v_{k+1}).
\end{align}
Then $\mathcal{Z}_U\cong \mathcal{Z}$.
\end{thm}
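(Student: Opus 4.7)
The plan is to identify $\mathcal{Z}_U$ with the inductive limit $B := \varinjlim(Z^{(q(k))}, \alpha_k)$ for suitable connecting maps, and then to conclude via the Jiang--Su classification theorem \cite{Jiang:1999hb} that $B \cong \mathcal{Z}$. The proof naturally splits into three steps: (a) construct the $\alpha_k$ so that they implement the relations $\mathcal{S}_{q(k)}$; (b) match $\mathcal{Z}_U$ with $B$ via universality; (c) verify that $B$ is simple and has a unique tracial state.

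The technical heart of the argument is step (a). Working inside $Z^{(q(k+1))} = \cs(\varphi_{k+1}, \psi_{k+1} \mid \mathcal{R}_{q(k+1)})$, I would set $\hat\varphi_k := f(\varphi_{k+1}) \circ \rho_k$ and take $\hat\psi_k$ to be the c.p.c.\ order zero map on $M_2$ whose square root at $e_{12}$ is the element $A$ on the right-hand side of (\ref{connect2}); by Remark~\ref{M_2} this exists as soon as $A$ is a contraction with $A^2 = 0$. That $\hat\varphi_k$ is c.p.c.\ order zero is immediate, since $\rho_k$ is c.p.c.\ order zero (by inspection of its summand decomposition) and its supporting homomorphism $\id_{M_{q(k)}} \otimes 1 \otimes 1$ preserves orthogonality, so the composition with $f(\varphi_{k+1})$ is again c.p.c.\ order zero. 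For $A^2 = 0$ I would use properties (\ref{v1})--(\ref{v3}) of $v_{k+1}$ together with the order-zero structure of $\varphi_{k+1}$ and $\psi_{k+1}$: the two summands of $A$ are arranged to live in mutually orthogonal corners, the first involving $d(\psi_{k+1})(e_{12})$ which intertwines the complement $\psi_{k+1}(e_{11}) = 1 - \varphi_{k+1}(1_{q(k+1)})$ with $\psi_{k+1}(e_{22}) \le \varphi_{k+1}(e_{11}^{(q(k+1))})$, while the second is pinned inside the support of $\varphi_{k+1}$ through $f(\varphi_{k+1})(v_{k+1})$. To verify (\ref{block1}) and (\ref{block2}) I would compute $AA^*$ and $A^*A$; the dominance identities (\ref{dominance}) are precisely what make the cross terms vanish and the diagonals combine to $AA^* = 1 - \hat\varphi_k(1_{q(k)})$ and $A^*A \le \hat\varphi_k(e_{11})$. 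Once these checks are complete, Proposition~\ref{universal drop} yields the connecting map $\alpha_k : Z^{(q(k))} \to Z^{(q(k+1))}$ with $\alpha_k(\varphi_k) = \hat\varphi_k$ and $\alpha_k(\psi_k) = \hat\psi_k$.

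Step (b) is formal. The canonical images in $B$ of the generators of each $Z^{(q(k))}$ satisfy $\mathcal{R}_{q(k)}$ by Proposition~\ref{universal drop}, and satisfy $\mathcal{S}_{q(k)}$ by the construction of the $\alpha_k$; universality of $\mathcal{Z}_U$ therefore produces a unital $*$-homomorphism $\mathcal{Z}_U \to B$. Conversely, the generators of $\mathcal{Z}_U$ satisfy $\mathcal{R}_{q(k)}$, so they determine maps $Z^{(q(k))} \to \mathcal{Z}_U$ which are compatible with the $\alpha_k$ exactly because $\mathcal{S}_{q(k)}$ holds in $\mathcal{Z}_U$; these assemble to a map $B \to \mathcal{Z}_U$. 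Both composites fix all generators and are therefore mutually inverse.

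For step (c), simplicity follows from the fact that the eigenvalues $\{i/q(k)\}_{i=1}^{q(k)}$ introduced by $\rho_k$, together with the taper effected by $f$, cause the point-evaluations of the $\alpha_k$ to become dense in $[0,1]$ as $k \to \infty$, so any proper ideal in some $Z^{(q(k))}$ is eventually killed in the limit. Uniqueness of the tracial state is more delicate: each $Z^{(q(k))}$ carries a simplex of traces parametrised by probability measures on $[0,1]$, and what must be shown is that the dual maps $\alpha_k^*$ contract this simplex to a single point; this is standard for Jiang--Su-type limits once the distribution of eigenvalues of $\rho_k$ is understood, and the resulting trace corresponds to Lebesgue measure. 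The main obstacle is unambiguously step (a): the identities $A^2 = 0$, $AA^* = 1 - \hat\varphi_k(1_{q(k)})$, and $A^*A \le \hat\varphi_k(e_{11})$ are delicate calculations requiring careful bookkeeping of the positive functional calculi on $\varphi_{k+1}$ and $\psi_{k+1}$, and every one of the dominance relations (\ref{dominance}) along with properties (\ref{v1})--(\ref{v3}) of $v_{k+1}$ is needed to make the cancellations close up.
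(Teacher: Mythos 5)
Your outline follows the paper's own proof essentially step for step: the same $\hat\varphi_k = f(\varphi_{k+1})\circ\rho_k$ and $\hat\psi_k^{1/2}(e_{12})$ given by the right-hand side of (\ref{connect2}), the same universality argument identifying $\mathcal{Z}_U$ with $\varinjlim (Z^{(q(k))},\alpha_k)$, the same eigenvalue-pattern analysis of the connecting maps for simplicity and trace uniqueness, and the same appeal to the Jiang--Su classification theorem, so the approach is correct. Two small remarks on step (a): the relation (\ref{block2}) to be verified is the equality $\hat\psi_k(e_{22})\hat\varphi_k(e_{11})=\hat\psi_k(e_{22})$, which is strictly stronger than the inequality $A^*A\le\hat\varphi_k(e_{11})$ you write down; and you need not check $A^2=0$ by a separate orthogonality argument, since once (\ref{block1}) and (\ref{block2}) hold one gets $A^*A\,AA^* = A^*A\,(1-\hat\varphi_k(1_{q(k)})) = 0$ and hence $A^2=0$ for free, which is the shortcut the paper uses.
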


\begin{proof}
For each $k$, define $\hat\varphi_k:M_{q(k)}\to Z^{(q(k+1))} = \cs(\varphi_{k+1}, \psi_{k+1} \mid \mathcal{R}_{q(k+1)})$ and $\hat\psi_k:M_2 \to Z^{(q(k+1))}$ by
\begin{equation}
\hat\varphi_k=f(\varphi_{k+1}) \circ \rho_k
\end{equation}
and
\begin{equation} \label{psihatdef}
\hat\psi^{1/2}_k(e_{12})= \gamma_k + \delta_k,
\end{equation}
where
\begin{equation}
\gamma_k:=\left(1-f(\varphi_{k+1})(1_{q(k+1)}) + g(\varphi_{k+1})(1_{q(k+1)} - \rho_k(1_{q(k)}))\right)^{1/2} d(\psi_{k+1})(e_{12})
\end{equation}
and
\begin{equation}
\delta_k:=h(\varphi_{k+1})\left(1_{q(k+1)} - \rho_k(1_{q(k)})\right)^{1/2}f(\varphi_{k+1})(v_{k+1}).
\end{equation}
We need to check that $\hat\varphi_k$ and $\hat\psi_k$ satisfy the relations $\mathcal{R}_{q(k)}$. First, it is obvious that $\hat\varphi_k$ is c.p.c.\ order zero since $\varphi_{k+1}$ and $\rho_k$ are, and $f$ is contractive. Next, to show that (\ref{psihatdef}) genuinely defines a c.p.c.\ order zero map $\hat\psi_k$, it suffices to check that $\gamma_k+\delta_k$ is a contraction that squares to zero (see Remark~\ref{M_2}). In fact, this would follow automatically from the relations (\ref{block1}) and (\ref{block2}) for $\hat\varphi_k$ and $\hat\psi_k$ (where, for the moment, we interpret $\hat\psi_k(e_{11})$ and $\hat\psi_k(e_{22})$ as \emph{notation} for $\hat\psi^{1/2}_k(e_{12})\hat\psi^{1/2}_k(e_{12})^*$ and $\hat\psi^{1/2}_k(e_{12})^*\hat\psi^{1/2}_k(e_{12})$ respectively). Indeed, $1-\hat\varphi_k(1_{q(k)})$ is certainly a contraction, and (\ref{block1}) and (\ref{block2}) would imply that
\begin{equation} \label{automatic}
\hat\psi_k(e_{22})\hat\psi_k(e_{11}) = \hat\psi_k(e_{22})(1-\hat\varphi_k(1_{q(k)})) = \hat\psi_k(e_{22}) - \sum_{i=1}^n \hat\psi_k(e_{22})\hat\varphi_k(e_{11})\hat\varphi_k(e_{ii}) = 0,
\end{equation}
and hence that $\left(\hat\psi^{1/2}_k(e_{12})\right)^2=0$. Let us now check that $\hat\varphi_k$ and $\hat\psi_k$ really do satisfy these relations.

\begin{claim} \label{block1 lemma}
$\hat\psi_k(e_{11})=1-\hat\varphi_k(1_{q(k)})$.
\end{claim}

\begin{claimproof}
First note that, using (\ref{block2}) and property (\ref{v1}) of the partial isometry $v_{k+1}$, we have
\[
d(\psi_{k+1})(e_{12})f(\varphi_{k+1})(v_{k+1}^*) = d^{1/2}(\psi_{k+1})(e_{12})d^{1/2}(\psi_{k+1})(e_{22})\varphi_{k+1}(e_{11})f(\varphi_{k+1})(v_{k+1}^*v_{k+1}v_{k+1}^*) = 0.
\]
Therefore, the cross terms $\gamma_k\delta_k^*$ and $\delta_k\gamma_k^*$ in the expansion of $\hat\psi_k(e_{11})=\hat\psi^{1/2}_k(e_{12})\hat\psi^{1/2}_k(e_{12})^*$ vanish.

Using the fact that $fh=h$, and property (\ref{v3}) of $v_{k+1}$, we have
\begin{align*}
h(\varphi_{k+1})(1_{q(k+1)} - \rho_k(1_{q(k)}))f(\varphi_{k+1})(v_{k+1})f(\varphi_{k+1})(v_{k+1}^*)&= h(\varphi_{k+1})((1_{q(k+1)} - \rho_k(1_{q(k)})v_{k+1}v_{k+1}^*)\\
&= h(\varphi_{k+1})(1_{q(k+1)} - \rho_k(1_{q(k)})).
\end{align*}
Thus, $\delta_k\delta_k^*= h(\varphi_{k+1})(1_{q(k+1)} - \rho_k(1_{q(k)}))$. From (\ref{block1}) we have
\[
d(\psi_{k+1})(e_{11}) = d(\psi_{k+1}(e_{11})) = d(1-\varphi_{k+1}(1_{q(k+1)})) = \bar d(\varphi_{k+1}(1_{q(k+1)})),
\]
where $\bar d(t) = d(1-t)$ as in (\ref{dominance}), whence we also obtain
\begin{align*}
(1-f(\varphi_{k+1})(1_{q(k+1)}))d(\psi_{k+1})(e_{11}) &= (1-f)(\varphi_{k+1}(1_{q(k+1)}))\bar d(\varphi_{k+1}(1_{q(k+1)}))\\
&= (1-f)(\varphi_{k+1}(1_{q(k+1)}))\\
&= 1-f(\varphi_{k+1})(1_{q(k+1)}).
\end{align*}
Similarly, we have $g(\varphi_{k+1})(1_{q(k+1)})d(\psi_{k+1})(e_{11}) = g(\varphi_{k+1})(1_{q(k+1)})$, hence
\[
g(\varphi_{k+1})(1_{q(k+1)} - \rho_k(1_{q(k)}))d(\psi_{k+1})(e_{11}) = g(\varphi_{k+1})(1_{q(k+1)} - \rho_k(1_{q(k)})).
\]
We therefore have $\gamma_k\gamma_k^*= 1-f(\varphi_{k+1})(1_{q(k+1)}) + g(\varphi_{k+1})(1_{q(k+1)} - \rho_k(1_{q(k)}))$. Since $g+h=f$, it follows that
\begin{align*}
\hat\psi_k(e_{11}) &= \gamma_k\gamma_k^* + \delta_k\delta_k^*\\
&= 1-f(\varphi_{k+1})(1_{q(k+1)}) + g(\varphi_{k+1})(1_{q(k+1)} - \rho_k(1_{q(k)})) + h(\varphi_{k+1})(1_{q(k+1)} - \rho_k(1_{q(k)}))\\
&= 1- f(\varphi_{k+1})(\rho_k(1_{q(k)}))\\
&= 1-\hat\varphi_k(1_{q(k)}).
\end{align*} 
\end{claimproof}

\begin{claim} \label{block2 lemma}
$\hat\psi_k(e_{22})\hat\varphi_k(e_{11})=\hat\psi_k(e_{22})$.
\end{claim}
\sloppy
\begin{claimproof}
Since $fh=h$ and $v_{k+1}$ is a partial isometry with property (\ref{v2}), we have
\begin{align*}
h(\varphi_{k+1})(1_{q(k+1)} - &\rho_k(1_{q(k)}))f(\varphi_{k+1})(v_{k+1})f(\varphi_{k+1})(\rho_k(e_{11}))\\
&= h(\varphi_{k+1})((1_{q(k+1)} - \rho_k(1_{q(k)}))v_{k+1})\\
&= h(\varphi_{k+1})(1_{q(k+1)} - \rho_k(1_{q(k)}))f(\varphi_{k+1})(v_{k+1}).
\end{align*}
Thus, $\delta_k\hat\varphi_k(e_{11})=\delta_k$. Next, it follows from (\ref{block2}), upon approximating $d^{1/2}$ and $f$ uniformly by polynomials, that 
\[
d^{1/2}(\psi_{k+1})(e_{22})f(\varphi_{k+1})(e_{11}) = f(1)d^{1/2}(\psi_{k+1})(e_{22}) = d^{1/2}(\psi_{k+1})(e_{22}).
\]
Since $e_{11}^{(q(k+1))} \perp (\rho_k(e_{11}^{(q(k))}) - e_{11}^{(q(k+1))})$ and $f(\varphi_{k+1})$ is order zero, we therefore have $d^{1/2}(\psi_{k+1})(e_{22})f(\varphi_{k+1})(\rho_k(e_{11})) = d^{1/2}(\psi_{k+1})(e_{22})$, hence $d(\psi_{k+1})(e_{12})f(\varphi_{k+1})(\rho_k(e_{11})) = d(\psi_{k+1})(e_{12})$. Therefore, $\gamma_k\hat\varphi_k(e_{11})=\gamma_k$, and so $\hat\psi_k(e_{22})\hat\varphi_k(e_{11})= (\gamma_k^* + \delta_k^*)(\gamma_k+\delta_k)\hat\varphi_k(e_{11}) = \hat\psi_k(e_{22})$.\\
\end{claimproof}
\fussy
We have now shown that $\hat\varphi_k$ and $\hat\psi_k$ satisfy the relations $\mathcal{R}_{q(k)}$. This means that, for any $k\in\mathbb{N}$, (\ref{connect1}) and (\ref{connect2}) do not introduce any new relations on $\varphi_{k+1}$ and $\psi_{k+1}$; thus, the sub-$\cs$-algebra generated by $\varphi_{k+1}$ and $\psi_{k+1}$ within $\mathcal{Z}_U$ is isomorphic to the universal $\cs$-algebra on relations $\mathcal{R}_{q(k+1)}$ (that is, to $Z^{(q(k+1))}$), and moreover contains the sub-$\cs$-algebra generated by $\varphi_{k}$ and $\psi_{k}$. Therefore, by Proposition~\ref{universal drop}, $\mathcal{Z}_U$ is isomorphic to an inductive limit of prime dimension drop algebras.

The strategy for the remainder of the proof is to pass from the abstract picture of $\mathcal{Z}_U$ as a universal $\cs$-algebra, to a concrete description as an inductive limit $\varinjlim (Z^{(q(k))}, \alpha_k)$, where the (unital) connecting maps $\alpha_k: Z^{(q(k))} \to Z^{(q(k+1))}$ are determined by (\ref{connect1}) and (\ref{connect2}) (i.e.\ $\alpha_k \circ \varphi_k = \hat\varphi_k$ and $\alpha_k \circ \psi_k = \hat\psi_k$). We will obtain explicit descriptions of the maps $\alpha_k$, and use these to show that $\mathcal{Z}_U$ is simple and has a unique tracial state.

For each $k\in \mathbb{N}$, let us fix an identification of $Z^{(q(k))}$ with $Z_{q(k),q(k)+1}$ via the order zero map $M_{q(k)} \to Z_{q(k),q(k)+1}$ (which, abusing notation, we also call $\varphi_k$) defined by:
\begin{equation} \label{phigen1}
\varphi_k(a)(t) = u_k(t)(a\otimes 1_{q(k)})u_k(t)^* \oplus (1-t)(a\otimes e_{q(k)+1,q(k)+1})
\end{equation}
for $a\in M_{q(k)}$ and $t\in [0,1]$. (Here, $u_k$ is a unitary in the algebra $C([0,1],M_{q(k)}\otimes M_{q(k)})$, included nonunitally in the top left corner of $C([0,1],M_{q(k)}\otimes M_{q(k)+1})$, with $u_k(0)=1$ and $u_k(1)$ implementing the flip in $M_{q(k)}\otimes M_{q(k)}$.) It is easy to write down a suitable $\psi_k$, but for the purpose of computing the connecting map $Z_{q(k),q(k)+1} \to Z_{q(k+1),q(k+1)+1}$ (also called $\alpha_k$), this is not necessary.

For each $t\in [0,1]$, let us write $\alpha_k^t$ for the map $\ev_t\circ\alpha_k: Z_{q(k),q(k)+1} \to M_{q(k+1)}\otimes M_{q(k+1)+1}$, where $\ev_t$ denotes evaluation at $t$. Then $\alpha_k^t$ is a finite-dimensional representation of $Z_{q(k),q(k)+1}$, so is a direct sum of finitely many irreducible representations $\pi_1^t,\ldots, \pi_{m(t)}^t$ of $Z_{q(k),q(k)+1}$ (corresponding up to unitary equivalence and, at the endpoints, up to multiplicity, to point evaluations). Since $\cs(\varphi_k(1_{q(k)}))\subset Z_{q(k),q(k)+1}$ separates the points of $[0,1]$, it is easy to see that the unitary equivalence classes of $\pi_1^t,\ldots, \pi_{m(t)}^t$ can be determined by computing $\alpha_k^t(\varphi_k(1_{q(k)}))$. To do this, note that
\begin{equation} \label{funphi1}
f(\varphi_{k+1})(b)(t) = u_{k+1}(t)(b\otimes 1_{q(k+1)})u_{k+1}(t)^* \oplus f(1-t)(b\otimes e_{q(k+1)+1,q(k+1)+1})
\end{equation}
for $b\in M_{q(k+1)}$, and recall the definition (\ref{rho}) of $\rho_k$. We then have, for $a\in M_{q(k)}$ and $t\in [0,1]$,
\begin{align*}
\alpha_k^t(\varphi_k(a)) &= f(\varphi_{k+1})(\rho_k(a))(t)\\
&= u_{k+1}(t)(a \otimes 1_{q(k)-1} \otimes 1_{q(k)} \otimes 1_{q(k+1)})u_{k+1}(t)^*\\
&\oplus u_{k+1}(t)\left(\bigoplus_{i=1}^{q(k)} \frac{i}{q(k)}\left(a \otimes e_{q(k),q(k)} \otimes e_{ii} \otimes 1_{q(k+1)} \right)\right)u_{k+1}(t)^*\\
&\oplus f(1-t)(a \otimes 1_{q(k)-1} \otimes 1_{q(k)} \otimes e_{q(k+1)+1,q(k+1)+1})\\
&\oplus f(1-t)\left(\bigoplus_{i=1}^{q(k)} \frac{i}{q(k)}\left(a \otimes e_{q(k),q(k)} \otimes e_{ii} \otimes e_{q(k+1)+1,q(k+1)+1}\right)\right)\\
&\sim_{\mathrm{u}} \left(\bigoplus_{m=1}^{q(k+1)}\bigoplus_{i=1}^{q(k)-1} \varphi_k(a)\left(1-\frac{i}{q(k)}\right)\right) \oplus \left(\bigoplus_{m=1}^{q(k)(q(k)-1)} \varphi_k(a)(1-f(1-t))\right)\\
&\oplus \left(\bigoplus_{i=1}^{q(k)} \varphi_k(a)\left(1-\frac{if(1-t)}{q(k)}\right)\right),
\end{align*}
where $\sim_{\mathrm{u}}$ denotes unitary equivalence. Write $h_i(t)= 1-\frac{if(1-t)}{q(k)}$ (so that, in fact, $h_{q(k)}=1-f(1-t)=h(t)$). By our earlier reasoning it then follows that, for every $t\in[0,1]$, there is a unitary $w_k(t)\in M_{q(k+1)}\otimes M_{q(k+1)+1}$  such that\begin{equation} \label{explicit alpha}
\alpha_k^t = w_k(t)\left(\left(\bigoplus_{m=1}^{q(k+1)} \bigoplus_{i=1}^{q(k)-1} \ev_{\frac{i}{q(k)}}\right) \oplus \left(\bigoplus_{m=1}^{q(k)(q(k)-1)} \ev_{h(t)}\right) \oplus \left(\bigoplus_{i=1}^{q(k)} \ev_{h_i(t)}\right)\right)w_k(t)^*.
\end{equation}
It could be that $t\mapsto w_k(t)$ is not continuous, but this does not matter. (Moreover, it is not difficult to show that, up to approximate unitary equivalence, continuity may be assumed anyway.)

We can also give a description of the connecting map $\alpha_{k,k+n} = \alpha_{k+n-1}\circ \cdots \circ \alpha_k$. For each $j\in \mathbb{N}$, let $\Lambda_j$ be the sequence of continuous functions given by listing each constant function $i/q(j)$ (for $1\le i \le q(j)-1$) with multiplicity $q(j+1)$, then $h$ with multiplicity $q(j)(q(j)-1)$ and then each $h_i$ for $1\le i \le q(j)$. Then $\alpha_{k,k+n}$ is fibrewise unitarily equivalent to the direct sum of all maps of the form $\ev_{F_1\circ \cdots \circ F_n}$ with $F_j \in \Lambda_{k+j-1}$ for $1\le j \le n$.

Let us write $T(A)$ for the space of tracial states on a $\cs$-algebra $A$. Recall that every tracial state on $Z_{q(j),q(j)+1}$ is of the form $\int\tr\circ\ev_t(\cdot) d\mu(t)$ for some Borel probability measure $\mu$ on $[0,1]$, where $\tr$ is the unique tracial state on $M_{q(j)}\otimes M_{q(j)+1}$. In particular, every such trace extends to a trace on $C([0,1],M_{q(j)}\otimes M_{q(j)+1})$, and is invariant under fibrewise unitary equivalence.

Since $\mathcal{Z}_U \cong \varinjlim Z_{q(k),q(k)+1}$ with unital connecting maps $\alpha_k$, we have $T(\mathcal{Z}_U) \cong \varprojlim T(Z_{q(k),q(k)+1})$. Thus $T(\mathcal{Z}_U)$ is an inverse limit of nonempty compact Hausdorff spaces, so is nonempty. That is, $\mathcal{Z}_U$ has at least one tracial state. For uniqueness, we need to show that for every $k\in \mathbb{N}$, every $\epsilon>0$, and every $b\in Z_{q(k),q(k)+1}$ we have
\begin{equation} \label{trace}
|\tau_1(\alpha_{k,k+n}(b)) - \tau_2(\alpha_{k,k+n}(b))| < \epsilon
\end{equation}
for all sufficiently large $n$ and every $\tau_1,\tau_2 \in T(Z_{q(k+n),q(k+n)+1})$. The key observation for this is that for each $j$, most of the elements in the sequence $\Lambda_j$ defined above are constant functions. In fact, the proportion of functions in $\Lambda_j$ that are \emph{not} constant is
\begin{equation} \label{proportion}
\frac{q(j)(q(j)-1)+q(j)}{q(j+1)(q(j)-1)+q(j)(q(j)-1)+q(j)} = \frac{q(j)^2}{q(j)^4-q(j)^3+q(j)^2} = \frac{1}{q(j)^2-q(j)+1}.
\end{equation}
Since $F_1\circ \cdots \circ F_n$ is constant if any of the $F_i$ are constant, it follows that for fixed $b\in Z_{q(k),q(k)+1}$, $\alpha_{k,k+n}(b)$ is fibrewise unitarily equivalent to a direct sum of continuous $M_{q(k)}\otimes M_{q(k)+1}$-valued functions, most of which are constant except for a small corner. But any two tracial states agree on the constant pieces, and the small corner has trace at most $\|b\| \prod_{j=k}^{k+n-1}\frac{1}{q(j)^2-q(j)+1}$, which of course converges to $0$ as $n \to \infty$. Thus (\ref{trace}) holds, and so $\mathcal{Z}_U$ has a unique tracial state.

It is well known that, to establish simplicity, it suffices to show the following (see for example \cite[Theorem 3.4]{Rordam:2009qy}): if $b$ is a nonzero element of $Z_{q(k),q(k)+1}$, then $\alpha_{k,r}(b)$ generates $Z_{q(r),q(r)+1}$ as a (closed, two-sided) ideal for every sufficiently large $r$ (which is the case if and only if $\alpha_{k,r}^t(b)$ is nonzero for every $t\in [0,1]$). Suppose that $b$ is such an element, so that there is an interval in $(0,1)$ of width $\epsilon>0$ on which $b$ is nonzero. For each $n\in \mathbb{N}$ and $t\in[0,1]$, $\alpha_{k,k+n+1}^t(b)$ contains summands unitarily equivalent to $b\left(h^{(n)}\left(\frac{i}{q(k+n)}\right)\right)$ for $1\le i \le q(k+n)-1$, where $h^{(n)}:=\overbrace{h\circ\cdots\circ h}^n$. Moreover, $h^{(n)}$ is of the form
\[
h^{(n)}(t)= \left\{
\begin{array}{ll}
0, & \quad 0\le t \le l_n/4^n\\
4^nt-l_n, & \quad l_n/4^n\le t \le (1+l_n)/4^n\\
1, & \quad (1+l_n)/4^n \le t \le 1
\end{array}
\right.
\]
for some $l_n$, and so it suffices to show that for large $n$ we have $\frac{1}{q(k+n)} < \frac{\epsilon}{4^n}$. But this is true for all large $n$ since $\frac{4^n}{q(k+n)} = \frac{4^n}{p^{3^{k+n}}} \longrightarrow 0 \quad \text{as} \quad n\to \infty$. Thus $\mathcal{Z}_U$ is simple.

It now follows from the classification theorem \cite[Theorem 6.2]{Jiang:1999hb} that $\mathcal{Z}_U\cong \mathcal{Z}$.
\end{proof}

\begin{remark} \label{algebraic}
One point that should be emphasized is that, despite the use of functional calculus, the relations of Theorem \ref{main1} really are \emph{algebraic}, or at least $\cs$-algebraic in the sense that they involve only $^*$-polynomial and order relations. This can be made explicit by encoding the relations (\ref{dominance}) satisfied by the functions $d$, $f$, $g$ and $h$ into the relations for the building blocks $Z^{(q(k))}$.

More specifically, it is not difficult to derive from Proposition~\ref{universal drop} that the dimension drop algebra $Z_{n,n+1}$ is isomorphic to the universal $\cs$-algebra on generators $\varphi$, $\psi$ and $h$ with relations:
\begin{enumerate}[(i)]
\item $\varphi$, $\psi$ and $h$ are c.p.c.\ order zero maps on $M_n$, $M_2$ and $\mathbb{C}$ respectively (in particular, $h$ is just a positive contraction);
\item $[\psi(e_{11}),\varphi(M_n)] = [h,\varphi(M_n)] =0$;
\item $\psi(e_{11})h = h$;
\item $h(1-\varphi(1_n))=1-\varphi(1_n)$;
\item $\psi(e_{22})\varphi(e_{11})=\psi(e_{22})$.
\end{enumerate}
(It is a straightforward exercise in functional calculus to write down inverse isomorphisms between the universal $\cs$-algebra determined by these relations and $Z^{(n)}\cong Z_{n,n+1}$.) The following is then proved in exactly the same way as Theorem~\ref{main1}.

\begin{thm} \label{alt1}
The Jiang-Su algebra $\mathcal{Z}$ is isomorphic to the universal unital $\cs$-algebra generated by c.p.c.\ order zero maps $\varphi_k$ on $M_{q(k)}$ ($k\in \mathbb{N}$) and $\psi_k$ on $M_2$ ($k\in \mathbb{N}$), and positive contractions $h_k$ ($k\in \mathbb{N}$), together with (for each $k\in \mathbb{N}$) the relations:
\begin{align*}
[\psi_k(e_{11}),\varphi_k(M_{q(k)})] &= [h_k,\varphi_k(M_{q(k)})] =0, \notag\\
\psi_k(e_{11})h_k &= h_k, \notag\\
h_k(1-\varphi_k(1_{q(k)})) &= 1-\varphi_k(1_{q(k)}), \notag\\
\psi_k(e_{22})\varphi_k(e_{11}) &= \psi_k(e_{22}), \notag\\
\varphi_k &= \varphi_{k+1} \circ \rho_k, \notag\\
\frac{1}{\sqrt2}(1+h_k)^{1/2}\psi_k^{1/2}(e_{12}) &= (h_{k+1} + (1-h_{k+1})\varphi_{k+1}(v_{k+1}v_{k+1}^*))^{1/2}\psi_{k+1}^{1/2}(e_{12}) \notag\\
&+ (1-\psi_{k+1}(e_{11}))^{1/2}\varphi_{k+1}^{1/2}(v_{k+1}),
\end{align*}
where the c.p.c.\ order zero maps $\rho_k: M_{q(k)} \to M_{q(k+1)}$ and the partial isometries $v_{k} \in M_{q(k)}$ are as in (\ref{rho}) and (\ref{vdef}) respectively. \qed
\end{thm}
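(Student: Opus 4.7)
The plan is to adapt the proof of Theorem~\ref{main1} to the alternative presentation of each building block $Z_{n,n+1}$ described in Remark~\ref{algebraic}. The strategy is structurally the same: construct candidate maps from $M_{q(k)}$ and $M_2$ into $Z^{(q(k+1))}$ (now with the extra generator $h_{k+1}$) satisfying the relations defining $Z^{(q(k))}$ in the alternative presentation, conclude from universality that the claimed inductive system exists, and verify that the limit is simple with a unique tracial state so that \cite[Theorem~6.2]{Jiang:1999hb} applies.

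The first step is to verify the presentation of $Z_{n,n+1}$ claimed in Remark~\ref{algebraic}. In one direction, starting from the generators $\varphi, \psi$ of $Z^{(n)}$ from Proposition~\ref{universal drop}, one adjoins a positive contraction $h$ obtained by positive functional calculus applied to $\psi(e_{11})=1-\varphi(1_n)$, using a continuous cutoff whose shape encodes relations (iii) and (iv) of the remark; the commutation relation (ii) is then immediate since $\psi(e_{11})$ commutes with $\varphi(M_n)$ in $Z_{n,n+1}$. In the reverse direction, given any generators $(\varphi,\psi,h)$ satisfying (i)--(v) in some $\cs$-algebra, the auxiliary element $h$ serves as a mediator between $\psi(e_{11})$ and $1-\varphi(1_n)$ from which one reconstructs the original generators of $Z^{(n)}$ and checks that the two assignments are mutually inverse.

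With the alternative presentation in hand, the main argument runs in lockstep with that of Theorem~\ref{main1}. For each $k$, I would read off from the stated relations the candidate c.p.c.\ order zero maps $\hat\varphi_k: M_{q(k)}\to Z^{(q(k+1))}$ and $\hat\psi_k: M_2\to Z^{(q(k+1))}$ and the candidate positive contraction $\hat h_k\in Z^{(q(k+1))}$: $\hat\varphi_k=\varphi_{k+1}\circ\rho_k$ directly, while $\hat h_k$ and $\hat\psi_k^{1/2}(e_{12})$ are produced using the same partial isometry $v_{k+1}$ as in Theorem~\ref{main1} and the final displayed equation in the statement. One then checks by direct calculation, essentially parallel to Claims~\ref{block1 lemma} and~\ref{block2 lemma}, that these candidates satisfy the modified relations (i)--(v). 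Universality then yields unital connecting $\cs$-homomorphisms $\alpha_k: Z^{(q(k))}\to Z^{(q(k+1))}$ and identifies the universal algebra in question with $\varinjlim(Z^{(q(k))},\alpha_k)$.

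To conclude, one shows that the limit is simple with a unique tracial state. Because the connecting maps $\alpha_k$ agree---up to how the functional calculus has been repackaged---with those of Theorem~\ref{main1}, the fibrewise description (\ref{explicit alpha}), the combinatorial analysis based on the proportion (\ref{proportion}) of non-constant summands in $\Lambda_j$, and the simplicity argument resting on $4^n/q(k+n)\to 0$ all carry over verbatim; \cite[Theorem~6.2]{Jiang:1999hb} then finishes the proof. The main obstacle, beyond the initial verification of the alternative presentation, is the bookkeeping in the step corresponding to Claims~\ref{block1 lemma} and~\ref{block2 lemma}: the functional-calculus identities (\ref{dominance}) that drive the calculation in Theorem~\ref{main1} must now be reproduced purely from the algebraic relations satisfied by $h_k$ and $h_{k+1}$, which is precisely the purpose of the seemingly mysterious $\frac{1}{\sqrt{2}}(1+h_k)^{1/2}$ factor on the left-hand side of the final relation.
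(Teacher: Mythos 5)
Your overall architecture coincides with the paper's: the paper's entire proof of Theorem~\ref{alt1} is the assertion that it ``is proved in exactly the same way as Theorem~\ref{main1}'', resting on the prior claim (dismissed as ``a straightforward exercise in functional calculus'') that $Z_{n,n+1}$ admits the three-generator presentation of Remark~\ref{algebraic}. Your outline of the inductive-limit step --- candidate maps $\hat\varphi_k$, $\hat\psi_k$ and element $\hat h_k$ verifying the modified relations, universality producing the connecting maps, and the unchanged trace and simplicity analysis feeding into \cite[Theorem 6.2]{Jiang:1999hb} --- matches this faithfully, and your reading of the factor $\frac{1}{\sqrt2}(1+h_k)^{1/2}$ as the algebraic surrogate for the functional calculus in (\ref{dominance}) is the intended one.

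The one step that would fail as written is your forward direction of the building-block presentation. You propose to keep the generators $\varphi,\psi$ of $Z^{(n)}$ from Proposition~\ref{universal drop} and merely adjoin an $h$ obtained by functional calculus in $\psi(e_{11})=1-\varphi(1_n)$. But relations (iii) and (iv) of Remark~\ref{algebraic} are inconsistent with $\psi(e_{11})=1-\varphi(1_n)$ unless that element is a projection: writing $b:=1-\varphi(1_n)$, the identities $bh=h$ and $hb=b$ give (taking adjoints of the first and using self-adjointness) $h=hb=b$, hence $b^2=b$, whereas $1-\varphi(1_n)$ is not a projection in $Z_{n,n+1}$. In particular no continuous cutoff works: $\psi(e_{11})\chi(\psi(e_{11}))=\chi(\psi(e_{11}))$ forces $\chi$ to vanish on $[0,1)$ and hence everywhere. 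One can check more generally that the standard $\varphi$ of (\ref{phigen1}) admits \emph{no} compatible pair $(\psi,h)$, since (iii) and (iv) would force $h(0)=\psi(e_{11})(0)=1$, which is irreconcilable with (v) by a rank count in the fibre at $0$. The actual content of the paper's ``exercise'' is that the isomorphism between the two presentations must deform all the generators by functional calculus: the new $\varphi$ corresponds to something like $f(\varphi)$ (so that $1-\varphi(1_n)$ acquires small support), $h$ is then inserted strictly between $1-\varphi(1_n)$ and the new $\psi(e_{11})$ as a chain of local units, and one must re-verify relation (v) for the deformed $\psi$. Once this is repaired the rest of your argument goes through as you describe, since the connecting maps are fibrewise unitarily equivalent to those of Theorem~\ref{main1} and the classification step is untouched.
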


\end{remark}

\section{$\mathcal{W}$ as a universal $\cs$-algebra} \label{w universal}

The article \cite{Razak:2002kq} (or, in a much more general setting, \cite{Robert:2010qy}) contains a classification by tracial data of simple inductive limits of building blocks
\begin{equation}
W_{n,m} := \{ f\in C([0,1], M_n\otimes M_{m}) \mid f(0) = a\otimes 1_{m}, f(1) = a\otimes 1_{m-1},  \: a\in M_n \}, \quad n,m \in \mathbb{N}, m> 1.
\end{equation}
Such building blocks are easily seen to be stably projectionless, and it can moreover be shown that they have trivial $K$-theory (this is why the classifying invariant is purely tracial). The classification is also complete in the sense that every permissible value of the invariant is attained---see \cite{Tsang:2005fj} or \cite[Proposition 5.3]{Jacelon:2010fj}. Then, $\mathcal{W}$ may be defined as the unique $\cs$-algebra in this class which has a unique tracial state (and no unbounded traces).

An explicit construction of $\mathcal{W}$ is given in \cite{Jacelon:2010fj}, and in this section we obtain another one by adapting the previous section's universal characterization of $\mathcal{Z}$. To begin with, notice that $W_{n,n+1}$ is isomorphic to a subalgebra of the dimension drop algebra $Z_{n,n+1}$; the following indicates that it in fact may be thought of as its nonuntial analogue (compare with Proposition~\ref{universal drop}).

\begin{prop} \label{nonunital drop}
Let $W^{(n)}$ be the universal $\cs$-algebra $\cs(\varphi, \psi \mid \hat{\mathcal{R}}_n)$, where $\hat{\mathcal{R}}_n$ denotes the set of relations:
\begin{enumerate}[(i)]
\item $\varphi$ and $\psi$ are c.p.c.\ order zero maps on $M_n$ and $M_2$ respectively;
\item $\psi(e_{11})=\varphi(1_n)(1-\varphi(1_n))$;
\item $\psi(e_{22})\varphi(e_{11})=\psi(e_{22})$.
\end{enumerate}
Then $W^{(n)} \cong W_{n,n+1}$.
\end{prop}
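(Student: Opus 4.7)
The plan is to follow the strategy of Proposition~\ref{universal drop} (equivalently \cite[Proposition 5.1]{Rordam:2009qy}), modifying the argument for the nonunital deformation $\psi(e_{11})=\varphi(1_n)(1-\varphi(1_n))$ in place of $\psi(e_{11})=1-\varphi(1_n)$. The proof has two halves: first, produce a concrete realisation $\varphi_0,\psi_0$ of the generators inside $W_{n,n+1}$ satisfying $\hat{\mathcal{R}}_n$, which, by universality, gives a surjection $\Phi\colon W^{(n)}\twoheadrightarrow W_{n,n+1}$; then, construct an inverse to prove injectivity.

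For the concrete realisation, identify $M_n\otimes M_{n+1}$ with the obvious tensor product and write $1_n\in M_{n+1}$ for the identity on the first $n$ diagonal entries. Set
\[
\varphi_0(a)(t) := a\otimes 1_n + (1-t)(a\otimes e_{n+1,n+1}),
\]
whose endpoint values $a\otimes 1_{n+1}$ and $a\otimes 1_n$ lie in $W_{n,n+1}$; the map $\varphi_0$ is c.p.c.\ order zero with supporting $^*$-homomorphism $a\mapsto a\otimes 1_{n+1}$, and $\varphi_0(1_n)(1-\varphi_0(1_n))(t) = t(1-t)(1_n\otimes e_{n+1,n+1})$. Take $V:= \sum_{j=1}^n e_{j1}\otimes e_{n+1,j}\in M_n\otimes M_{n+1}$, so that $VV^* = 1_n\otimes e_{n+1,n+1}$, $V^*V = e_{11}\otimes 1_n$, and $V^2 = 0$. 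Then $\sqrt{t(1-t)}\,V\in W_{n,n+1}$ is a contraction squaring to zero, so by Remark~\ref{M_2} it equals $\psi_0^{1/2}(e_{12})$ for a unique c.p.c.\ order zero map $\psi_0\colon M_2\to W_{n,n+1}$. Substituting into $\hat{\mathcal{R}}_n$ yields identities that are immediate calculations (in particular, (iii) holds since $V^*V=e_{11}\otimes 1_n$ is absorbed by $\varphi_0(e_{11})$). For surjectivity of the induced $\Phi$, the products $\psi_0^{1/2}(e_{12})\varphi_0(e_{1j})$ (and their adjoints), together with the $\varphi_0(e_{ij})$ and continuous functional calculus in $\varphi_0(1_n)$, supply enough matrix-coefficient continuous functions of $t$ to generate $W_{n,n+1}$.

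For injectivity, given any c.p.c.\ order zero maps $(\varphi,\psi)$ in a $\cs$-algebra $A$ satisfying $\hat{\mathcal{R}}_n$, one builds a $^*$-homomorphism $\Psi\colon W_{n,n+1}\to A$ with $\Psi(\varphi_0)=\varphi$ and $\Psi(\psi_0)=\psi$; then $\Psi\circ\Phi$ fixes the generators of $W^{(n)}$. Following the pattern of \cite[Proposition 5.1]{Rordam:2009qy}, $\Psi$ is assembled from three ingredients: the supporting $^*$-homomorphism $\pi_\varphi\colon M_n\to A^{**}$ (carrying the $M_n$-coordinate), continuous functional calculus in the positive contraction $\varphi(1_n)$ (carrying the $[0,1]$-coordinate), and the contraction $\psi^{1/2}(e_{12})$ (providing the partial-isometry-like link to the extra $(n+1)$-st row and column). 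Relation (ii) forces $\psi(e_{11})$, hence all of $\psi$, to vanish wherever $\varphi(1_n)$ has spectral value $0$ or $1$---matching the degeneration of $W_{n,n+1}$ at $t=0,1$---while relation (iii) localises the extra column underneath the first matrix corner.

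The principal obstacle is checking that $\Psi$ is well-defined: every $\cs$-algebraic relation among the matrix-coefficient generators of $W_{n,n+1}$ must already follow from $\hat{\mathcal{R}}_n$. The argument is an explicit computation: express each element of the form $f(t)\cdot e_{ij}\otimes e_{kl}\in W_{n,n+1}$ as a $^*$-polynomial in $\pi_\varphi(e_{ij})$, $f(\varphi(1_n))$ (for $f\in C_0(0,1]$), and $\psi^{1/2}(e_{12})$, then track the surjectivity calculation through an arbitrary pair $(\varphi,\psi)$, using only $\hat{\mathcal{R}}_n$ and the order zero structure to guarantee that the defining endpoint conditions of $W_{n,n+1}$ are automatically respected.
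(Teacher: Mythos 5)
Your first half (the concrete realisation of $\hat{\mathcal{R}}_n$ inside $W_{n,n+1}$ and the surjectivity of the induced map) matches the paper: same $\varphi_0$, same $v(t)=t^{1/2}(1-t)^{1/2}\sum_j e_{j1}\otimes e_{n+1,j}$, same appeal to Remark~\ref{M_2}, and your generation claim is exactly what the paper establishes by Stone--Weierstrass. The problem is the universality direction, where your proposal has a genuine gap. You propose to build $\Psi\colon W_{n,n+1}\to A$ by expressing each matrix-coefficient element of $W_{n,n+1}$ as a $^*$-polynomial in $\pi_\varphi$, functional calculus in $\varphi(1_n)$, and $\psi^{1/2}(e_{12})$, and then checking that ``every $\cs$-algebraic relation among the generators already follows from $\hat{\mathcal{R}}_n$.'' That last clause is not a computation one can perform: $W_{n,n+1}$ is not given to you by a presentation, so there is no finite (or even explicitly enumerable) list of relations to verify, and establishing that $\hat{\mathcal{R}}_n$ generates \emph{all} relations holding in $W_{n,n+1}$ --- equivalently, that $\|P(\varphi,\psi)\|\le\|P(\varphi_0,\psi_0)\|$ for every $^*$-polynomial $P$ --- is precisely the content of the proposition. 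As written, the well-definedness of $\Psi$ is assumed rather than proved.

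The paper closes this gap differently: by \cite[Lemma 3.2.2]{Loring:1997it} it suffices to treat an \emph{irreducible} representation $\{\hat\varphi,\hat\psi\}$ of the relations on a Hilbert space $H$, and one then shows that every such representation is (up to unitary equivalence) $\ev_t\circ(\varphi,\psi)$ for some $t\in(0,1)\cup\{\infty\}$. The key device is the auxiliary order zero maps $\hat\psi_i$ with $\hat\psi_i^{1/2}(e_{12})=\hat\psi^{1/2}(e_{12})\hat\varphi^{1/2}(e_{1i})$ and the element $z=\hat\psi(e_{11})+\sum_{i=1}^n\hat\psi_i(e_{22})$, which the relations force to be central, hence a scalar $\zeta\in[0,1/4]$; the case $\zeta=0$ yields the endpoint representation, while $\zeta>0$ produces $n+1$ equivalent orthogonal projections summing to $1$ and identifies the representation with $\ev_{1-t}$ where $t(1-t)=\zeta$. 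None of this appears in your proposal, and without it (or some substitute, e.g.\ assembling $W_{n,n+1}$ from universal pieces such as cones glued along its ideal structure) the injectivity of $\Phi$ is not established. If you want to keep your architecture, replace the ``explicit computation'' paragraph with this representation-theoretic classification.
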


\begin{proof}
The proof is almost identical to that of Proposition~\ref{universal drop}, but we include it here for completeness. Define $\varphi:M_n \to W_{n,n+1}$ by
\[ %\label{wphigen}
\varphi(a)(t) = (a\otimes 1_n) \oplus (1-t)(a\otimes e_{n+1,n+1})
\]
for $a\in M_n$ and $t\in [0,1]$. Then $\varphi$ is clearly a c.p.c.\ order zero map. Equivalently, if we write
\[
x_i(t) = (e_{1i}\otimes 1_n) \oplus (1-t)^{1/2}(e_{1i}\otimes e_{n+1,n+1}) = \varphi^{1/2}(e_{1i})(t)
\]
for $1\le i \le n$, then the $x_i$ satisfy the order zero relations $\mathcal{R}_n^{(0)}$ and $\varphi(e_{ij})=x_i^*x_j$.
Next, define
\[
v(t) = t^{1/2}(1-t)^{1/2}\sum_{j=1}^n e_{j1}\otimes e_{n+1,j}.
\]
Then $vv^*= \varphi(1_n)(1-\varphi(1_n))$ and $vx_1=v$, and (so) $\|v\|\le 1$ and $v^2=0$. In particular, there is a unique c.p.c.\ order zero map $\psi: M_2 \to W_{n,n+1}$ with $\psi^{1/2}(e_{12})=v$, i.e.\
\[ %\label{wpsi}
\psi(e_{12})(t) = t(1-t)\sum_{j=1}^n e_{j1}\otimes e_{n+1,j},
\]
so that $\psi(e_{11})=vv^*$, $\psi(e_{22})=v^*v$ and $\varphi$ and $\psi$ satisfy all of the relations $\hat{\mathcal{R}}_n$.

Next, we check that $v$ and the $x_i$ generate $W_{n,n+1}$ as a $\cs$-algebra. Write $A:=\cs(\{v,x_1,\ldots,x_n\})$. We have
\[
v^*x_i(t) = t^{1/2}(1-t)(e_{1i}\otimes e_{1,n+1})
\]
and
\[
v^*x_ivx_j(t) = t(1-t)^{3/2}(e_{1j}\otimes e_{1i})
\]
for $1\le i,j \le n$. Thus, for $t\in(0,1)$, the elements $v^*x_i(t)$ and $v^*x_ivx_j(t)$ give all matrix units $\{e_{1k} \otimes e_{1l}\}_{1\le k \le n, 1\le l \le n+1}$, so generate all of $M_n\otimes M_{n+1}$, and so the irreducible representation $\ev_t:W_{n,n+1} \to M_n\otimes M_{n+1}$ restricts to an irreducible representation of $A$. For $t\in\{0,1\}$, the $x_i$ generate all the matrix units of $M_n$ in the endpoint irreducible representation $\ev_\infty:W_{n,n+1} \to M_n$. Thus every irreducible representation of $W_{n,n+1}$ restricts to an irreducible representation of $A$. Also, since $x_1(s)$ is not unitarily equivalent to $x_1(t)$ for distinct $s,t\in(0,1)$, it follows that inequivalent irreducible representations of $W_{n,n+1}$ restrict to inequivalent irreducible representations of $A$. Therefore, by Stone-Weierstrass (i.e.\ \cite[Proposition 11.1.6]{Dixmier:1964rt}), we do indeed have $\cs(\{v,x_1,\ldots,x_n\})=W_{n,n+1}$.\\

It remains to show that these generators of $W_{n,n+1}$ enjoy the appropriate universal property: for every representation $\{\hat\varphi, \hat\psi\}$ of the given relations, we need to show that there is a $^*$-homomorphism $W_{n,n+1} \to \cs(\hat\varphi, \hat\psi)$ sending $\varphi$ to $\hat\varphi$ and $\psi$ to $\hat\psi$. By \cite[Lemma 3.2.2]{Loring:1997it}, it suffices to consider the case where $\{\hat\varphi, \hat\psi\}$ is an \emph{irreducible} representation on some Hilbert space $H$ (i.e.\ has trivial commutant in $\mathfrak{B}(H)$). Note that the irreducible representations of $W_{n,n+1}$ are (up to unitary equivalence), the evaluation maps $\ev_t:W_{n,n+1} \to M_{n(n+1)}$ for $t\in (0,1)$ together with the endpoint representation $\ev_\infty:W_{n,n+1}\to M_n$. We will therefore show that (again, up to unitary equivalence) $\hat\varphi=\ev_t\circ\varphi$ and $\hat\psi=\ev_t\circ\psi$ for some $t\in(0,1)\cup\{\infty\}$.

For each $i\in\{1,\ldots,n\}$, let $\hat\psi_i:M_2 \to \cs(\hat\varphi, \hat\psi)$ be the c.p.c.\ order zero map defined by $\hat\psi_i^{1/2}(e_{12})=\hat\psi^{1/2}(e_{12})\hat\varphi^{1/2}(e_{1i})$, so that $\hat\psi_i(e_{11})=\hat\varphi(1_n)(1-\hat\varphi(1_n))$ and $\hat\psi_i(e_{22})\hat\varphi(e_{ii})=\hat\psi_i(e_{22})$. Define
\[
z:= \hat\psi(e_{11}) + \sum_{i=1}^n \hat\psi_i(e_{22}) \in \cs(\hat\varphi, \hat\psi).
\]
Then
\[
[z,\hat\varphi(e_{1j})] = \hat\psi_1(e_{22})\hat\varphi(e_{1j}) - \hat\varphi(e_{1j})\hat\psi_j(e_{22}) = 0, 
\]
and
\begin{align*}
[z,\hat\psi(e_{12})] &= \hat\psi^2(e_{12}) + \sum_{i=1}^n \hat\psi_i(e_{22})\hat\psi^{1/2}(e_{11})\hat\psi^{1/2}(e_{12}) - \sum_{i=1}^n \hat\psi(e_{12})\hat\varphi(e_{11})\hat\varphi(e_{ii})\hat\psi_i(e_{22})\\
&= \hat\psi^2(e_{12}) + 0 - \hat\psi^2(e_{12})\\
&= 0,
\end{align*}
so $z$ is central in $\cs(\hat\varphi, \hat\psi)$, and is therefore $\zeta 1$ for some scalar $\zeta$. Moreover, $z$ is positive with $\|z\|=\|\hat\psi(e_{11})\| = \|\hat\varphi(1_n)(1-\hat\varphi(1_n))\| \le 1/4$, so $0\le \zeta \le 1/4$.

If $\zeta = 0$ then $\hat\psi=0$ and $\hat\varphi(1_n)$ is therefore a projection. It follows that $\hat \varphi$ is a $^*$-homomorphism giving an irreducible representation of $M_n$ on $H$. Thus (up to unitary equivalence) $H=\mathbb{C}^n$ and $\hat\varphi=\ev_\infty\circ\varphi$.

Suppose that $\zeta>0$. Then $\zeta \hat\psi(e_{11}) = z \hat\psi(e_{11}) = (\hat\psi(e_{11}))^2$, so $p:=\zeta^{-1} \hat\psi(e_{11})$ and $q_i:=\zeta^{-1} \hat\psi_i(e_{22})$ are equivalent orthogonal projections with $p+q_1+\cdots+q_n=1$. Since $p$ commutes with $\hat\varphi(M_n)$, the maps $p\hat \varphi(\cdot)p$ and $(1-p)\hat \varphi(\cdot)(1-p)$ are c.p.c.\ order zero. In fact,
\[
\zeta \hat \varphi(1_n)(1-p) = \hat \varphi(1_n)(z- \hat\psi(e_{11}))
= z- \hat\psi(e_{11})
= \zeta (1-p),
\]
i.e.\ $(1-p)\hat \varphi(1_n)(1-p) = 1-p$. Thus, $(1-p)\hat \varphi(\cdot)(1-p)$ is a \emph{unital} c.p.c.\ order zero map into the corner $(1-p)\mathfrak{B}(H)(1-p) \cong \mathfrak{B}((1-p)H)$, so is a $^*$-homomorphism into this corner. Also, $p\hat \varphi(1_n)p$ commutes with (the WOT-closure of) the corner $p\cs(\hat\varphi, \hat\psi)p = p\cs(\hat\varphi)p$ (which, by irreducibility, is all of $p\mathfrak{B}(H)p \cong \mathfrak{B}(pH)$) so $p\hat \varphi(1_n)p=tp$ for some $t\in [0,1]$. So $t^{-1}p\hat\varphi(\cdot)p$ is also a $^*$-homomorphism, and is in fact an \emph{irreducible} representation of $M_n$ on $pH$. In particular, up to unitary equivalence, $pH=\mathbb{C}^n$ and $p\hat\varphi(\cdot)p=t\cdot\id_{M_n}$.

Moreover, since every $q_i$ is equivalent to $p$, they all have trace $n$ ($=\tr(p)$). Thus (again up to unitary equivalence) $(1-p)H = \mathbb{C}^{n^2}$ (so $H = \mathbb{C}^{n(n+1)}$) and $(1-p)\hat \varphi(\cdot)(1-p): M_n \to M_{n^2}$ is just $a \mapsto \diag(a,\ldots,a)$. Finally, since
\[
t(1-t)p = tp(p-tp) = \hat\varphi(1_n)p(p-\hat\varphi(1_n)p)=p\hat\varphi(1_n)(1-\hat\varphi(1_n))=p\hat \psi(e_{11}) = \zeta p,
\]
we have $t(1-t)=\zeta$. Therefore, $\hat\varphi=p\hat\varphi(\cdot)p+(1-p)\hat \varphi(\cdot)(1-p) = \ev_{1-t}\circ\varphi$ and, since $\zeta^{-1/2}\hat \psi^{1/2}(e_{12})$ is a partial isometry implementing an equivalence between $q_1$ and $p$, $\hat\psi = \ev_{1-t}\circ\psi$ (up to conjugation by a unitary). Thus $W_{n,n+1}$ has the required universal property.
\end{proof}

\begin{remark}
It should also be possible to detect $^*$-homomorphisms from $W_{n,n+1}$ to a stable rank one $\cs$-algebra $A$ at the level of the Cuntz semigroup $W(A)$ (just as for $Z_{n,n+1}$ in \cite[Proposition 5.1]{Rordam:2009qy}). The existence of $\langle x \rangle \in W(A)$ and a positive contraction $y\in A$ with $n\langle x \rangle = \langle y \rangle$ and $\langle y-y^2 \rangle \ll   \langle x \rangle$ (where $\ll$ denotes the relation of compact containment) is probably necessary and sufficient, but perhaps this is not the most useful characterization.
\end{remark}

Finally, we present $\mathcal{W}$ as a nonunital deformation of $\mathcal{Z}$.

\begin{thm} \label{main2}
Choose positive functions $d,f,g,h \in C_0(0,1]$, partial isometries $v_{k+1} \in M_{q(k+1)}$, and c.p.c.\ order zero maps $\rho_k: M_{q(k)} \to M_{q(k+1)}$ as in Theorem~\ref{main1}. Define $\mathcal{W}_U$ to be the universal $\cs$-algebra generated by c.p.c.\ order zero maps $\varphi_k$ on $M_{q(k)}$ ($k\in \mathbb{N}$) and $\psi_k$ on $M_2$ ($k\in \mathbb{N}$) such that for each $k$, these maps satisfy the relations $\hat{\mathcal{R}}_{q(k)}$, i.e.\
\begin{equation} \label{block1w}
\psi_k(e_{11})=\varphi_k(1_{q(k)})(1-\varphi_k(1_{q(k)}))
\end{equation}
and
\begin{equation} \label{block2w}
\psi_k(e_{22})\varphi_k(e_{11})=\psi_k(e_{22}),
\end{equation}
together with the additional relations $\hat{\mathcal{S}}_{q(k)}$ given by
\begin{equation} \label{connect1w}
\varphi_k = f(\varphi_{k+1}) \circ \rho_k,
\end{equation}
\begin{align} \label{connect2w}
\psi^{1/2}_k(e_{12}) &= f(\varphi_{k+1})(\rho_k(1_{q(k)}))^{1/2}\bigg(h(\varphi_{k+1})(1_{q(k+1)} - \rho_k(1_{q(k)}))^{1/2}f(\varphi_{k+1})(v_{k+1})\\
\notag &+ \left(1-f(\varphi_{k+1})(1_{q(k+1)}) + g(\varphi_{k+1})(1_{q(k+1)} - \rho_k(1_{q(k)}))\right)^{1/2} d(\psi_{k+1})(e_{12})\bigg).
\end{align}
Then $\mathcal{W}_U\cong \mathcal{W}$.
\end{thm}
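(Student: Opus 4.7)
The plan is to mirror the proof of Theorem~\ref{main1}, modifying only what is forced by replacing the relations $\mathcal{R}_n$ with $\hat{\mathcal{R}}_n$ and by the extra factor $f(\varphi_{k+1})(\rho_k(1_{q(k)}))^{1/2}$ in the nonunital connecting relation (\ref{connect2w}). For each $k$ I would define $\hat\varphi_k: M_{q(k)} \to W^{(q(k+1))}$ and $\hat\psi_k: M_2 \to W^{(q(k+1))}$ by
\[
\hat\varphi_k := f(\varphi_{k+1}) \circ \rho_k, \qquad \hat\psi_k^{1/2}(e_{12}) := f(\varphi_{k+1})(\rho_k(1_{q(k)}))^{1/2}(\delta_k + \gamma_k),
\]
where $\gamma_k$ and $\delta_k$ are the expressions introduced in the proof of Theorem~\ref{main1} (now built from the generators of $W^{(q(k+1))}$). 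The task is to verify that $(\hat\varphi_k, \hat\psi_k)$ satisfies $\hat{\mathcal{R}}_{q(k)}$, so that universality of $W^{(q(k))}$ produces $*$-homomorphisms $\alpha_k: W^{(q(k))} \to W^{(q(k+1))}$ presenting $\mathcal{W}_U$ as an inductive limit of the Razak building blocks $W_{q(k),q(k)+1}$ via Proposition~\ref{nonunital drop}.

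Verification of (\ref{block2w}) follows the proof of Claim~\ref{block2 lemma}: the identities $\gamma_k\hat\varphi_k(e_{11}) = \gamma_k$ and $\delta_k\hat\varphi_k(e_{11}) = \delta_k$ depend only on (\ref{block2w}) (which is the same relation that was called (\ref{block2})) and the properties of $v_{k+1}$, both unchanged; and the extra factor $f(\varphi_{k+1})(\rho_k(1))^{1/2}$ appears symmetrically on both sides of $\hat\psi_k(e_{22})$. The cross terms in the expansion of $\hat\psi_k(e_{11})$ again vanish by the calculation in Claim~\ref{block1 lemma}, which uses only (\ref{block2w}) and the orthogonality (\ref{v1}) of $v_{k+1}$. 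The main new ingredient is (\ref{block1w}): since $\psi_{k+1}(e_{11}) = \varphi_{k+1}(1_{q(k+1)})(1 - \varphi_{k+1}(1_{q(k+1)}))$, one obtains $d(\psi_{k+1})(e_{11}) = \hat d(\varphi_{k+1}(1_{q(k+1)}))$ with $\hat d(t) = d(t(1-t))$, replacing the role played by $\bar d$ in Theorem~\ref{main1}. The dominance identities $(f-f^2)\hat d = f-f^2$ and $g\hat d = g$ from (\ref{w dominance}), together with $g+h = f$, collapse the expression
\[
\hat\varphi_k(1_{q(k)})^{1/2}(\gamma_k\gamma_k^* + \delta_k\delta_k^*)\hat\varphi_k(1_{q(k)})^{1/2}
\]
to $\hat\varphi_k(1_{q(k)})(1 - \hat\varphi_k(1_{q(k)}))$. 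The outer factor $\hat\varphi_k(1_{q(k)})^{1/2}$ is precisely what introduces the second power of $f$ needed to convert the $\hat d$-twisted term into a clean algebraic expression.

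Once the relations $\hat{\mathcal{R}}_{q(k)}$ have been established, the remainder of the proof parallels Theorem~\ref{main1}: fix concrete models for $\varphi_k$ and $\psi_k$ using the generators constructed in the proof of Proposition~\ref{nonunital drop}, derive a fibrewise formula for $\alpha_k^t$ entirely analogous to (\ref{explicit alpha}), and apply the counting estimate (\ref{proportion}) together with the growth $4^n/q(k+n) \to 0$ to conclude that $\mathcal{W}_U$ is simple with a unique bounded tracial state. Nontrivial unbounded densely finite traces are excluded by observing that each positive contraction $\varphi_k(1_{q(k)}) \in \mathcal{W}_U$ is full (by simplicity) and has trace at most $1$ under every bounded tracial state on each building block, so any l.s.c.\ trace on $\mathcal{W}_U$ that is finite on $\varphi_1(1_{q(1)})$ must be globally bounded. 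The classification theorem of \cite{Razak:2002kq} (or equivalently the main classification result of \cite{Robert:2010qy}) then yields $\mathcal{W}_U \cong \mathcal{W}$.

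The principal obstacle is the verification of (\ref{block1w}): unlike in the $\mathcal{Z}$ case, where $d(\psi_{k+1})(e_{11}) = \bar d(\varphi_{k+1}(1))$ combines cleanly with $(1-f)\bar d = 1-f$, the nonunital analogue forces the outer factor $f(\varphi_{k+1})(\rho_k(1))^{1/2}$ to participate in the functional calculus before the modified dominance identities (\ref{w dominance}) can cancel the $\hat d$-twist. A secondary issue, not present in Theorem~\ref{main1}, is ruling out nontrivial unbounded traces, which is essential to match the definition of $\mathcal{W}$.
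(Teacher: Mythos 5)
Your proposal follows the paper's proof of Theorem~\ref{main2} essentially step for step: the same $\hat\varphi_k$ and $\hat\psi_k$ (the paper merely absorbs the outer factor $f(\varphi_{k+1})(\rho_k(1_{q(k)}))^{1/2}$ into its definitions of $\gamma_k$ and $\delta_k$, a purely notational difference), the same observation that \eqref{block1w} forces $d(\psi_{k+1})(e_{11})=\hat d(\varphi_{k+1}(1_{q(k+1)}))$ so that the identities \eqref{w dominance} must replace $(1-f)\bar d=1-f$, the same remark that the verification of \eqref{block2w} is unchanged from Claim~\ref{block2 lemma}, and the same passage to an inductive limit of the blocks $W_{q(k),q(k)+1}$ with fibrewise connecting maps as in \eqref{explicit alpha}, followed by the simplicity and trace arguments and the classification theorem of Razak/Robert. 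Your identification of the role of the outer factor --- supplying the extra power of $f$ so that $(f-f^2)\hat d=f-f^2$ applies --- is exactly the point of the paper's computation.

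The one place where your reasoning does not hold up is the exclusion of unbounded traces. The inference ``$\varphi_1(1_{q(1)})$ is full and a lower semicontinuous trace is finite on it, hence the trace is globally bounded'' is false in general: the canonical trace on $\mathcal{K}$ is finite on the full positive element $e_{11}$ but unbounded. (It is also not clear why a densely finite trace should a priori be finite on $\varphi_1(1_{q(1)})$.) The correct argument is the one the paper gestures at, and it again rests on the proportion estimate \eqref{proportion}: a lower semicontinuous trace $\tau$ on $\mathcal{W}_U$ restricts to a (bounded) trace $\tau_k$ on the image of each $W_{q(k),q(k)+1}$, and since the connecting maps are degenerate only on a summand whose relative size is controlled by $\prod_j\bigl(1-\tfrac{1}{q(j)^2-q(j)+1}\bigr)>0$, one gets $\tau(\varphi_1(1_{q(1)}))\ge c\,\|\tau_k\|$ for a constant $c>0$ independent of $k$; this bounds $\sup_k\|\tau_k\|$ and hence $\tau$. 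This is a fixable slip rather than a structural flaw, and the tool needed to repair it is already present elsewhere in your argument.
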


\begin{proof}
The proof is essentially the same as that of Theorem~\ref{main1}, so we omit most of the details. As before, let us write $\hat\varphi_k=f(\varphi_{k+1}) \circ \rho_k$ and $\hat\psi^{1/2}_k(e_{12})= \gamma_k + \delta_k$, where this time 
\[
\gamma_k:= f(\varphi_{k+1})(\rho_k(1_{q(k)}))^{1/2} \lambda_k d(\psi_{k+1})(e_{12})
\]
and
\[
\delta_k:=f(\varphi_{k+1})(\rho_k(1_{q(k)}))^{1/2} \mu_k f(\varphi_{k+1})(v_{k+1}),
\]
with
\[
\lambda_k := (1-f(\varphi_{k+1})(1_{q(k+1)}) + g(\varphi_{k+1})(1_{q(k+1)} - \rho_k(1_{q(k)})))^{1/2}
\]
and
\[
\mu_k := h(\varphi_{k+1})(1_{q(k+1)} - \rho_k(1_{q(k)}))^{1/2}.
\]
To show that $\hat\psi_k(e_{11})=\hat\varphi_k(1_{q(k)})(1-\hat\varphi_k(1_{q(k)}))$, we proceed exactly as in the proof of Claim~\ref{block1 lemma}. The only difference is that we now have
\[
d(\psi_{k+1})(e_{11}) = d(\psi_{k+1}(e_{11})) = d(\varphi_{k+1}(1_{q(k+1)})(1-\varphi_{k+1}(1_{q(k+1)}))) = \hat d(\varphi_{k+1}(1_{q(k+1)})),
\]
where $\hat d(t) = d(t(1-t))$ as in (\ref{w dominance}). We also have
\[
f(\varphi_{k+1})(\rho_k(1_{q(k)}))(1-f(\varphi_{k+1})(1_{q(k+1)})) = \pi_{\varphi_{k+1}}(\rho_k(1_{q(k+1)}))(f-f^2)(\varphi_{k+1}(1_{q(k+1)})).
\]
Since $\hat d(f-f^2)=f-f^2$, this therefore gives
\[
f(\varphi_{k+1})(\rho_k(1_{q(k)}))(1-f(\varphi_{k+1})(1_{q(k+1)}))d(\psi_{k+1})(e_{11}) = f(\varphi_{k+1})(\rho_k(1_{q(k)}))(1-f(\varphi_{k+1})(1_{q(k+1)})),
\]
and the rest of the argument carries over mutatis mutandis. (Note in particular that $\lambda_k$ and $\mu_k$ both commute with $f(\varphi_{k+1})(\rho_k(1_{q(k)}))^{1/2}$.) The proof that $\hat\psi_k(e_{22})\hat\varphi_k(e_{11})=\hat\psi_k(e_{22})$ is literally the same as the proof of Claim~\ref{block2 lemma}.

We now know that $\mathcal{W}_U$ is isomorphic to an inductive limit $\varinjlim (W_{q(k),q(k+1)},\beta_k)$. Moreover, arguing exactly as before, we see that the connecting maps $\beta_k$ are (fibrewise) unitarily equivalent to the connecting maps $\alpha_k$ obtained earlier. That is, there are unitaries $z_k(t) \in M_{q(k+1)}\otimes M_{q(k+1)+1}$ such that
\begin{equation} \label{beta}
\beta_k^t = z_k(t)\left(\left(\bigoplus_{m=1}^{q(k+1)} \bigoplus_{i=1}^{q(k)-1} \ev_{\frac{i}{q(k)}}\right) \oplus \left(\bigoplus_{m=1}^{q(k)(q(k)-1)} \ev_{h(t)}\right) \oplus \left(\bigoplus_{i=1}^{q(k)} \ev_{h_i(t)}\right)\right)z_k(t)^*
\end{equation}
for every $t\in[0,1]$.

The same arguments as with $\mathcal{Z}_U$ show that $\mathcal{W}_U$ is simple and has a unique tracial state. (One has to perhaps be slightly careful about the \emph{existence} of a trace, since the space of tracial states of a nonunital $\cs$-algebra need not be compact. But this is not an issue.) The only minor technicality is that, since the building blocks $W_{q(k),q(k+1)}$ are nonunital and the connecting maps $\beta_k$ are degenerate,  $\mathcal{W}_U$ may have unbounded traces. However, one can easily show, using (\ref{proportion}), that this is not the case. It therefore follows from the classification theorem of \cite{Razak:2002kq} (or indeed from the more general result proved in \cite{Robert:2010qy}) that $\mathcal{W}_U \cong \mathcal{W}$.
\end{proof}

\begin{corollary} \label{canonical embedding}
There exists a trace-preserving embedding of $\mathcal{W}$ into $\mathcal{Z}$. Such an embedding is canonical at the level of the Cuntz semigroup, and is unique up to approximate unitary equivalence.
\end{corollary}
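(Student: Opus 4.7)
\noindent\emph{Proof plan.} My plan is to combine the universal presentations from Theorems~\ref{main1}~and~\ref{main2} to construct the embedding explicitly, and then to invoke Robert's classification for the uniqueness and Cuntz-semigroup assertions.

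Working inside $\mathcal{Z}$ with its canonical generators $\{\varphi_k,\psi_k\}_{k\in\mathbb{N}}$, I would set $\bar\varphi_k := \varphi_k$ and define a candidate second generator $\bar\psi_k^{1/2}(e_{12}) := \varphi_k(1_{q(k)})^{1/2}\,\psi_k^{1/2}(e_{12})$. The technical crux is the algebraic identity
\[
v\,\varphi_k(1_{q(k)})^{1/2} = v, \qquad \text{where } v := \psi_k^{1/2}(e_{12}),
\]
which I would derive purely from the $\mathcal{Z}$-relations as follows. Relation~(\ref{block2}) together with the order-zero structure of $\varphi_k$ forces $\psi_k(e_{22})\varphi_k(1_{q(k)}) = \psi_k(e_{22})$: for $i>1$ one has $\psi_k(e_{22})\varphi_k(e_{ii}) = \psi_k(e_{22})\varphi_k(e_{11})\varphi_k(e_{ii}) = 0$, and for $i=1$ the relation is (\ref{block2}) itself. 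In particular $\varphi_k(1_{q(k)})$ and $\psi_k(e_{22})$ commute, and $\psi_k(e_{22})$ is supported where $\varphi_k(1_{q(k)})=1$; functional calculus inside the commutative $\cs$-subalgebra they generate then yields $\psi_k(e_{22})^{1/n}\varphi_k(1_{q(k)})^{1/2} = \psi_k(e_{22})^{1/n}$ for every $n$, and combined with $v = \lim_n v\,\psi_k(e_{22})^{1/n}$ (polar decomposition $v = u(v^*v)^{1/2}$ in $\mathcal{Z}^{**}$) this gives the identity.

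With $v\,\varphi_k(1_{q(k)})^{1/2}=v$ in hand, the $\mathcal{W}$-relations collapse to short calculations: $(\bar\psi_k^{1/2}(e_{12}))^2 = \varphi_k(1_{q(k)})^{1/2}(v\varphi_k(1_{q(k)})^{1/2})v = \varphi_k(1_{q(k)})^{1/2}v^2 = 0$, so Remark~\ref{M_2} promotes $\bar\psi_k^{1/2}(e_{12})$ to a c.p.c.\ order zero map $\bar\psi_k$ on $M_2$; $\bar\psi_k(e_{11}) = \varphi_k(1_{q(k)})^{1/2}vv^*\varphi_k(1_{q(k)})^{1/2} = \varphi_k(1_{q(k)})(1-\varphi_k(1_{q(k)}))$ gives (\ref{block1w}); $\bar\psi_k(e_{22}) = v^*\varphi_k(1_{q(k)})v = \psi_k(e_{22}) - \psi_k(e_{22})^2$ satisfies (\ref{block2w}) because $\psi_k(e_{22})$ does; (\ref{connect2w}) follows from (\ref{connect2}) by left-multiplication with $\varphi_k(1_{q(k)})^{1/2} = f(\varphi_{k+1})(\rho_k(1_{q(k)}))^{1/2}$; and (\ref{connect1w}) coincides with (\ref{connect1}). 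The universal property of $\mathcal{W}_U$ thus yields a $^*$-homomorphism $\iota:\mathcal{W}\to\mathcal{Z}$, injective because $\mathcal{W}$ is simple and $\iota(\bar\varphi_0) = \varphi_0\neq 0$. For trace-preservation, $\tau_\mathcal{Z}\circ\iota$ is a bounded trace on $\mathcal{W}$ and hence a scalar multiple $c\cdot\tau_\mathcal{W}$ by uniqueness of the tracial state; I would fix $c=1$ by evaluating on the sequence $\bar\varphi_k(1_{q(k)})$, whose $\tau_\mathcal{W}$- and $\tau_\mathcal{Z}\circ\iota$-values both reduce to the same Lebesgue integral of the fiberwise trace of $(1_{q(k)}\otimes 1_{q(k)})\oplus(1-t)(1_{q(k)}\otimes e_{q(k)+1,q(k)+1})$, the fiberwise unitaries $u_k(t)$ of the $Z$-picture being invisible to the trace.

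For the remaining assertions I would appeal to L.~Robert's classification theorem~\cite{Robert:2010qy} for $^*$-homomorphisms between inductive limits of $1$-dimensional NCCW complexes by their induced Cuntz-semigroup morphisms, together with his description~\cite{Robert:2011fj} of $\mathrm{Cu}(\mathcal{W})$ in terms of lower semicontinuous $2$-quasitraces. Since $\mathrm{Cu}(\mathcal{W})$ and $\mathrm{Cu}(\mathcal{Z})$ are canonically determined by their (unique) tracial data, trace-preservation forces a unique induced $\mathrm{Cu}$-morphism---this is the canonicality assertion---and Robert's theorem then upgrades this to uniqueness of $\iota$ up to approximate unitary equivalence. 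The step I expect to require the most care is the abstract derivation of $v\,\varphi_k(1_{q(k)})^{1/2}=v$: avoiding the concrete $Z_{q(k),q(k)+1}$ fiber picture requires chaining (\ref{block2}) with the order-zero structure of $\varphi_k$, a functional-calculus computation in $\cs(\varphi_k(1_{q(k)}),\psi_k(e_{22}))$, and a polar-decomposition limit in $\mathcal{Z}^{**}$. Once that identity is in place, the rest becomes essentially mechanical.
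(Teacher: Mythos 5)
Your overall strategy is a faithful expansion of what the paper leaves implicit: the paper's proof of this corollary is just the two sentences ``This follows immediately from Theorem~\ref{main2} and Theorem~\ref{main1}; the result can already be deduced from \cite{Robert:2010qy}, which also gives the uniqueness statement.'' Your construction $\bar\varphi_k:=\varphi_k$, $\bar\psi_k^{1/2}(e_{12}):=\varphi_k(1_{q(k)})^{1/2}\psi_k^{1/2}(e_{12})$ is exactly the natural way to realize the $\mathcal{W}_U$-relations inside $\mathcal{Z}_U$, your derivation of $v\varphi_k(1_{q(k)})^{1/2}=v$ from (\ref{block2}) and order zero-ness is correct (and can even be shortened: $\psi_k(e_{22})(1-\varphi_k(1_{q(k)}))=0$ already gives $\|v(1-\varphi_k(1_{q(k)})^{1/2})\|^2=\|(1-\varphi_k(1_{q(k)})^{1/2})v^*v(1-\varphi_k(1_{q(k)})^{1/2})\|=0$), and the verifications of (\ref{block1w}), (\ref{block2w}), (\ref{connect1w}), injectivity, trace-preservation, and the appeal to Robert's classification for canonicity and uniqueness are all sound.

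There is, however, one genuine gap, at the step you dismiss as ``left-multiplication'': in the relation (\ref{connect2w}) that $\bar\psi_k$ must satisfy, every occurrence of $\psi_{k+1}$ has to be replaced by the \emph{new} generator $\bar\psi_{k+1}$, so the right-hand side contains $d(\bar\psi_{k+1})(e_{12})$, whereas multiplying (\ref{connect2}) on the left by $\varphi_k(1_{q(k)})^{1/2}$ produces $d(\psi_{k+1})(e_{12})$. These are different elements: writing $v=\psi_{k+1}^{1/2}(e_{12})=u(v^*v)^{1/2}$, one has $\bar\psi_{k+1}^{1/2}(e_{12})=(1-vv^*)^{1/2}v=u\bigl(v^*v(1-v^*v)\bigr)^{1/2}$, hence $d(\bar\psi_{k+1})(e_{12})=\hat d(\psi_{k+1})(e_{12})$ with $\hat d(t)=d(t(1-t))$ as in (\ref{w dominance}), while $d\neq\hat d$ on $(0,\tfrac14)\cup(\tfrac34,1]$. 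What you actually need is
\[
f(\varphi_{k+1})(\rho_k(1_{q(k)}))^{1/2}\,\lambda_k\,\bigl(d-\hat d\bigr)(\psi_{k+1})(e_{12})=0,
\]
where $\lambda_k=\bigl(1-f(\varphi_{k+1})(1_{q(k+1)})+g(\varphi_{k+1})(1_{q(k+1)}-\rho_k(1_{q(k)}))\bigr)^{1/2}$. This identity is true, but only because of the specific supports of $d$, $f$, $g$: using $(d-\hat d)(\psi_{k+1})(e_{11})=\bar e(\varphi_{k+1}(1_{q(k+1)}))$ with $\bar e(s)=d(1-s)-d(s(1-s))$ supported in $[0,\tfrac14)\cup(\tfrac34,1)$, one checks $f\cdot(1-f)\cdot\bar e=0$ and $g\cdot\bar e=0$, so both summands of $\lambda_k^2$ are annihilated after multiplying by the prefactor $f(\varphi_{k+1})(\rho_k(1_{q(k)}))^{1/2}$. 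This is a computation of exactly the same flavour as Claims~\ref{block1 lemma}~and~\ref{block2 lemma}, not a formality, and your write-up must include it (or some equivalent argument, e.g.\ an approximate intertwining of the inductive systems using (\ref{explicit alpha}) and (\ref{beta})) for the appeal to the universal property of $\mathcal{W}_U$ to be legitimate.
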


\begin{proof}
This follows immediately from Theorem~\ref{main2} and Theorem~\ref{main1}. The result can already be deduced from the main theorem of \cite{Robert:2010qy}, which also gives the uniqueness statement.
\end{proof}

\section{Outlook} \label{questions}

\subsection{} It might be interesting to characterize other $\cs$-algebras as we have done for $\mathcal{Z}$ and $\mathcal{W}$. It should in particular be possible, for any $n\ge 2$, to obtain a universal construction of a simple, monotracial, stably projectionless $\cs$-algebra $\mathcal{W}_n$ with $(K_0(\mathcal{W}_n),K_1(\mathcal{W}_n))=(0,\mathbb{Z}/(n-1)\mathbb{Z})$. Candidate building blocks could be of the form
\[
\{ f\in C([0,1], M_m\otimes M_{(n-1)(m+1)}) : f(0) = a\otimes 1_{(n-1)(m+1)}, f(1) = a\otimes 1_{(n-1)m},  \: a\in M_m \},
\]
which at least have the right $K$-theory. Of course, $\mathcal{W}_2$ is just $\mathcal{W}$, obtained as in Theorem~\ref{main2}.

It was proved in \cite{Robert:2010qy} that $\mathcal{W}\otimes \mathcal{K} \cong \mathcal{O}_2 \rtimes \mathbb{R}$ for certain `quasi-free' actions of $\mathbb{R}$ on the Cuntz algebra $\mathcal{O}_2$ (see for example \cite{Kishimoto:1996yu} and \cite{Dean:2001qy}). More generally, one would expect (i.e.\ the Elliott conjecture predicts) that $\mathcal{W}_n\otimes \mathcal{K} \cong \mathcal{O}_n \rtimes \mathbb{R}$, and in this sense $\mathcal{W}_n$ might be thought of as a stably projectionless analogue of $\mathcal{O}_n$. (Similar speculation is made in the article \cite{Nawata:2012fk}.)

It is unclear what interpretation the corresponding universal \emph{unital} algebras might have. Note for example that the Jiang-Su algebra is not stably isomorphic to a crossed product of a Kirchberg algebra by $\mathbb{R}$ (when simple, such a crossed product is either traceless or stably projectionless---see \cite[Proposition 4]{Kishimoto:1997kq}).

\subsection{} One of our motivations for presenting $\mathcal{Z}$ as a universal $\cs$-algebra was to find a direct proof of its strong self-absorption (i.e.\ one that does not rely on classification). To put this problem into context, consider the other strongly self-absorbing $\cs$-algebras. On the one hand, UHF algebras of infinite type can also be described in terms of order zero generators and relations, for example:
\[
M_{2^\infty} \cong \cs((\varphi_k)_{k=1}^\infty \mid \text{$\varphi_k$ order zero on $M_{q(k)}$}, \varphi_k(1_{q(k)})=\varphi_k(1_{q(k)})^2, \varphi_k = \varphi_{k+1}\circ\id_{q(k)}\otimes 1_{q(k)} \otimes 1_{q(k)})
\]
(where $q(k)$ is still $2^{3^k}$), and the proof of strong self-absorption in this case amounts to linear algebra. On the other hand, while $\mathcal{O}_2$ and $\mathcal{O}_\infty$ are presented simply as $\cs(s_1,s_2 \mid s_i^*s_i =1= s_1s_1^*+s_2s_2^*)$ and $\cs((s_i)_{i=1}^\infty \mid s_i^*s_j = \delta_{ij})$ respectively, the proofs that $\mathcal{O}_2 \otimes \mathcal{O}_2 \cong \mathcal{O}_2$ and $\mathcal{O}_\infty \otimes \mathcal{O}_\infty \cong \mathcal{O}_\infty$ require some difficult analysis (see for example \cite{Rordam:1994rm}). It is conceivable that our presentation of $\mathcal{Z}$ lies somewhere in the middle of this spectrum.

That being said, it is at least possible to show from our relations, in connection with \cite{Robert:2010qy}, that the $\cs$-algebra $\mathcal{Z}_U^{\otimes\infty}$ is strongly self-absorbing. (One can show that $\mathcal{Z}_U^{\otimes\infty}$ has stable rank one and strict comparison, and then use the main theorem of \cite{Robert:2010qy} to deduce strong self-absorption.)

Meanwhile, it remains an open problem to prove that $\mathcal{W}\otimes \mathcal{W} \cong \mathcal{W}$.


\begin{thebibliography}{10}

\bibitem{Dean:2001qy}
Andrew Dean.
\newblock A continuous field of projectionless {$\cs$}-algebras.
\newblock {\em Canad. J. Math.}, 53(1):51--72, 2001.

\bibitem{Dixmier:1964rt}
Jacques Dixmier.
\newblock {\em Les {$\cs$}-alg{\`e}bres et leurs repr{\'e}sentations}.
\newblock Cahiers Scientifiques, Fasc. XXIX. Gauthier-Villars \& Cie,
  {\'E}diteur-Imprimeur, Paris, 1964.

\bibitem{Jacelon:2010fj}
Bhishan Jacelon.
\newblock A simple, monotracial, stably projectionless {$\cs$}-algebra.
\newblock {\em \emph{arXiv preprint math.OA/1006.5397; to appear in} J. London
  Math. Soc.}, 2010.

\bibitem{Jiang:1999hb}
Xinhui Jiang and Hongbing Su.
\newblock On a simple unital projectionless {$\cs$}-algebra.
\newblock {\em Amer. J. Math.}, 121(2):359--413, 1999.

\bibitem{Kishimoto:1996yu}
Akitaka Kishimoto and Alex Kumjian.
\newblock Simple stably projectionless {$\cs$}-algebras arising as crossed
  products.
\newblock {\em Canad. J. Math.}, 48(5):980--996, 1996.

\bibitem{Kishimoto:1997kq}
\bysame.
\newblock Crossed products of {C}untz algebras by quasi-free automorphisms.
\newblock In {\em Operator algebras and their applications ({W}aterloo, {ON},
  1994/1995)}, volume~13 of {\em Fields Inst. Commun.}, pages 173--192. Amer.
  Math. Soc., Providence, RI, 1997.

\bibitem{Loring:1997it}
Terry~A. Loring.
\newblock {\em Lifting solutions to perturbing problems in {$\cs$}-algebras},
  volume~8 of {\em Fields Institute Monographs}.
\newblock American Mathematical Society, Providence, RI, 1997.

\bibitem{Matui:2012qv}
Hiroki Matui and Yasuhiko Sato.
\newblock Strict comparison and {$\mathcal{Z}$}-absorption of nuclear
  {$\cs$}-algebras.
\newblock {\em \emph{arXiv preprint math.OA/1111.1637; to appear in} Acta
  Math.}, 2011.

\bibitem{Nawata:2012fk}
Norio Nawata.
\newblock Picard groups of certain stably projectionless {$\cs$}-algebras.
\newblock {\em \emph{arXiv preprint math.OA/1207.1930}}, 2012.

\bibitem{Razak:2002kq}
Shaloub Razak.
\newblock On the classification of simple stably projectionless {$\cs$}-algebras.
\newblock {\em Canad. J. Math.}, 54(1):138--224, 2002.

\bibitem{Robert:2010qy}
Leonel Robert.
\newblock Classification of inductive limits of 1-dimensional {NCCW} complexes.
\newblock {\em \emph{arXiv preprint math.OA/1007.1964; to appear in} ÊAdv. Math.}, 2010.

\bibitem{Robert:2011fj}
\bysame.
\newblock The cone of functionals on the {C}untz semigroup.
\newblock {\em \emph{arXiv preprint math.OA/1102.1451; to appear in} Math. Scand.}, 2011.

\bibitem{Rordam:1994rm}
Mikael R{\o}rdam.
\newblock A short proof of {E}lliott's theorem:
  {$\mathcal{O}_2\otimes\mathcal{O}_2\cong\mathcal{O}_2$}.
\newblock {\em C. R. Math. Rep. Acad. Sci. Canada}, 16(1):31--36, 1994.

\bibitem{Rordam:2009qy}
Mikael R{\o}rdam and Wilhelm Winter.
\newblock The {J}iang-{S}u algebra revisited.
\newblock {\em J. Reine Angew. Math.}, 642:129--155, 2010.

\bibitem{Sato:2010rz}
Yasuhiko Sato.
\newblock The {R}ohlin property for automorphisms of the {J}iang-{S}u algebra.
\newblock {\em J. Funct. Anal.}, 259(2):453--476, 2010.

\bibitem{Toms:2007uq}
Andrew~S. Toms and Wilhelm Winter.
\newblock Strongly self-absorbing {$\cs$}-algebras.
\newblock {\em Trans. Amer. Math. Soc.}, 359(8):3999--4029 (electronic), 2007.

\bibitem{Tsang:2005fj}
Kin-Wai Tsang.
\newblock On the positive tracial cones of simple stably projectionless {$\cs$}-algebras.
\newblock {\em J. Funct. Anal.}, 227(1):188--199, 2005.

\bibitem{Winter:2007qf}
Wilhelm Winter.
\newblock Localizing the {E}lliott conjecture at strongly self-absorbing {$\cs$}-algebras.
\newblock {\em \emph{arXiv preprint math.OA/0708.0283; to appear in} J. Reine
  Angew. Math.}, 2007.

\bibitem{Winter:2010hl}
\bysame.
\newblock Decomposition rank and {$\mathcal{Z}$}-stability.
\newblock {\em Invent. Math.}, 179(2):229--301, 2010.

\bibitem{Winter:2009yq}
\bysame.
\newblock Strongly self-absorbing {$\cs$}-algebras are {$\mathcal{Z}$}-stable.
\newblock {\em J. Noncommut. Geom.}, 5(2):253--264, 2011.

\bibitem{Winter:2012pi}
\bysame.
\newblock Nuclear dimension and {$\mathcal{Z}$}-stability of pure
  {$\cs$}-algebras.
\newblock {\em Invent. Math.}, 187(2):259--342, 2012.

\bibitem{Winter:2009sf}
Wilhelm Winter and Joachim Zacharias.
\newblock Completely positive maps of order zero.
\newblock {\em M{\"u}nster J. Math.}, 2:311--324, 2009.

\end{thebibliography}
\end{document}